\documentclass[12pt]{amsart}
\usepackage{amscd}
%
%
%
\def\NZQ{\Bbb}               

\def\ZZ{{\NZQ Z}}

%
%
\def\frk{\frak}               

\def\Phi{{\frk n}}
\def\Phi{{\frk N}}
%
%

%
\def\opn#1#2{\def#1{\operatorname{#2}}} 
%
\opn\chara{char} \opn\length{\ell} \opn\pd{pd} \opn\rk{rk}
\opn\projdim{proj\,dim} \opn\injdim{inj\,dim} \opn\rank{rank}
\opn\depth{depth} \opn\sdepth{sdepth} \opn\fdepth{fdepth}
\opn\grade{grade} \opn\height{height} \opn\embdim{emb\,dim}
\opn\codim{codim}  \opn\min{min} \opn\max{max}

\opn\Tr{Tr} \opn\bigrank{big\,rank}
\opn\superheight{superheight}\opn\lcm{lcm}
\opn\trdeg{tr\,deg}
\opn\reg{reg} \opn\lreg{lreg} \opn\ini{in} \opn\lpd{lpd}
\opn\size{size}
%
\opn\div{div} \opn\Div{Div} \opn\cl{cl} \opn\Cl{Cl}
%
%
\opn\Spec{Spec} \opn\Supp{Supp} \opn\supp{supp} \opn\Sing{Sing}
\opn\Ass{Ass} \opn\Min{Min}
%
%
\opn\Ann{Ann} \opn\Rad{Rad} \opn\Soc{Soc}
%
%
\opn\Im{Im} \opn\Ker{Ker} \opn\Coker{Coker} \opn\Am{Am}
\opn\Hom{Hom} \opn\Tor{Tor} \opn\Ext{Ext} \opn\End{End}
\opn\Aut{Aut} \opn\id{id}  \opn\deg{deg}

\opn\nat{nat}
\opn\pff{pf}
\opn\Pf{Pf} \opn\GL{GL} \opn\SL{SL} \opn\mod{mod} \opn\ord{ord}
\opn\Gin{Gin} \opn\Hilb{Hilb}
%
%
\opn\aff{aff} \opn\con{conv} \opn\relint{relint} \opn\st{st}
\opn\lk{lk} \opn\cn{cn} \opn\core{core} \opn\vol{vol}
\opn\link{link} \opn\star{star}
\opn\gr{gr}

%
%

\def\pot#1#2{#1[\kern-0.28ex[#2]\kern-0.28ex]}

%
%
\opn\dirlim{\underrightarrow{\lim}}
\opn\inivlim{\underleftarrow{\lim}}
%
%
%

\let\Dirsum=\bigoplus

%
%
\let\to=\rightarrow

\def\Implies{\ifmmode\Longrightarrow \else
        \unskip${}\Longrightarrow{}$\ignorespaces\fi}
\def\implies{\ifmmode\Rightarrow \else
        \unskip${}\Rightarrow{}$\ignorespaces\fi}
\def\iff{\ifmmode\Longleftrightarrow \else
        \unskip${}\Longleftrightarrow{}$\ignorespaces\fi}

\let\:=\colon
\newtheorem{Theorem}{Theorem}[section]
\newtheorem{Lemma}[Theorem]{Lemma}
\newtheorem{Corollary}[Theorem]{Corollary}
\newtheorem{Proposition}[Theorem]{Proposition}
\newtheorem{Remark}[Theorem]{Remark}

\newtheorem{Example}[Theorem]{Example}

\newtheorem{Question}[Theorem]{Question}
%
%
\let\epsilon\varepsilon
\let\phi=\varphi
\let\kappa=\varkappa
%
%
\textwidth=15cm \textheight=22cm \topmargin=0.5cm
\oddsidemargin=0.5cm \evensidemargin=0.5cm \pagestyle{plain}
%
%
\def\qed{\ifhmode\textqed\fi
      \ifmmode\ifinner\quad\qedsymbol\else\dispqed\fi\fi}
\def\textqed{\unskip\nobreak\penalty50
       \hskip2em\hbox{}\nobreak\hfil\qedsymbol
       \parfillskip=0pt \finalhyphendemerits=0}
\def\dispqed{\rlap{\qquad\qedsymbol}}

%
\opn\dis{dis}
\def\pnt{{\raise0.5mm\hbox{\large\bf.}}}

\opn\Lex{Lex}



\begin{document}

\title{\bf Computing the Stanley depth}

\author{ Dorin Popescu, Muhammad Imran Qureshi }

\thanks{The authors would like to express their gratitude to ASSMS of GC University Lahore for creating a very appropriate atmosphere for research work. This research is partially supported by HEC Pakistan. The first author was mainly supported by CNCSIS Grant  ID-PCE no. 51/2007.}

\address{Dorin Popescu, Institute of Mathematics "Simion Stoilow",
University of Bucharest, P.O.Box 1-764, Bucharest 014700, Romania}
\email{dorin.popescu@imar.ro}
\address{Muhammad Imran Qureshi, Abdus Salam School of Mathematical Sciences, GC University, Lahore,68-B New Muslim town Lahore, Pakistan}
\email{imranqureshi18@gmail.com}
\maketitle
\begin{abstract}
Let $Q$ and $Q'$ be two monomial primary ideals of a polynomial algebra $S$ over a field. We give an upper bound for the Stanley depth of $S/(Q\cap Q')$ which is reached if $Q$,$Q'$ are irreducible. Also we show that Stanley's Conjecture holds for $Q_1\cap Q_2$, $S/(Q_1\cap Q_2\cap Q_3)$, $(Q_i)_i$ being some irreducible
monomial ideals  of $S$.

  \vskip 0.4 true cm
 \noindent
  {\it Key words } : Monomial Ideals,  Stanley decompositions, Stanley depth.\\
 {\it 2000 Mathematics Subject Classification: Primary 13H10, Secondary
13P10, 13C14, 13F20.}
\end{abstract}

\section*{Introduction}

 Let $K$ be a field and $S=K[x_1,\ldots,x_n]$ be the polynomial ring over $K$
 in $n$ variables  and $M$  a finitely generated multigraded (i.e.
$\ZZ^n$-graded) $S$-module. Given $z\in M$  a homogeneous element in
$M$ and $Z\subseteq \{x_1,\ldots,x_n\}$, let $zK[Z]\subset M$ be the
linear $K$-subspace of all elements of the form $zf$, $f\in K[Z]$.
This subspace is called Stanley space of dimension $|Z|$, if $zK[Z]$
is a free $K[Z]$-module. A Stanley decomposition of $M$ is a
presentation of the  $K$-vector space $M$ as a finite direct sum of
Stanley spaces $\mathcal{D}:\,\,M=\Dirsum_{i=1}^rz_iK[Z_i]$. Set
$\sdepth \mathcal{D}=\min\{|Z_i|:i=1,\ldots,r\}$. The number
\[
\sdepth(M):=\max\{\sdepth({\mathcal D}):\; {\mathcal D}\; \text{is a
Stanley decomposition of}\;  M \}
\]
is called the Stanley depth of $M$. This is a combinatorial invariant which has some common properties with the homological invariant depth. Stanley conjectured (see
\cite{S}) that $\sdepth\ M\geq \depth\ M$, but this conjecture is still open for a long time in spite of some  results obtained mainly for $n\leq 5$ (see \cite{A}, \cite{So}, \cite{HSY}, \cite{AP}, \cite{P}, \cite{Po}). An algorithm to compute the Stanley depth is given in \cite{HVZ} and was used here to find several examples.
Very important in our computations were the results from \cite{Bi}, \cite{C2} and \cite{Sh}.

Let  $Q,Q'$ be two monomial primary ideals such that $\dim S/(Q+Q')=0$. Then $\sdepth\ S/(Q\cap Q')\leq $
$$\max\{\min\{\dim(S/Q'),\lceil\frac{\dim(S/Q)}{2}\rceil\},\\  \min\{\dim(S/Q),\lceil\frac{\dim(S/Q')}{2}\rceil\}\},$$
and the bound is reached when $Q,Q'$ are non-zero irreducible monomial ideals (see Proposition \ref{up}, or more general in Corollary \ref{eg} ), $\lceil\frac{a}{2}\rceil$
being the smallest integer $\geq a/2$, $a\in {\bf Q}$.

Let $Q_1,Q_2,Q_3$ be three non-zero irreducible monomial ideals of $S$. If $\dim S/(Q_1+Q_2)=0$ then $$\sdepth (Q_1\cap Q_2)\geq
\lceil\frac{\dim(S/Q_1)}{2}\rceil+\lceil\frac{\dim(S/Q_2)}{2}\rceil$$ (see Lemma \ref{lb} , or more general in Theorem \ref{Lob}).
In this case, our bound is better than the bound given by \cite{KY} and \cite{O} (see Remark \ref{ky}).
 Using these results we show that
$\sdepth\ (Q_1\cap Q_2)\geq \depth\ (Q_1\cap Q_2)$, and
$$\sdepth\ S/(Q_1\cap Q_2\cap Q_3)\geq \depth\ S/(Q_1\cap Q_2\cap Q_3),$$ that is
Stanley's Conjecture holds for $Q_1\cap Q_2$ and $S/(Q_1\cap Q_2\cap Q_3)$ (see Theorem \ref{sci} , Theorem \ref{s3}).
\vskip 1 cm

\section{A lower bound for Stanley's depth of some cycle modules}
We start with few simple lemmas which we include for the completeness of our paper.
\begin{Lemma}\label{l1}
Let $Q$ be a monomial primary ideal in $S=K[x_1,\ldots ,x_n]$. Suppose that $\sqrt{Q}=(x_1,\ldots ,x_r)$ where $1\leq r\leq n$, Then
there exists a Stanley decomposition
$$S/Q=\oplus uK[x_{r+1},\ldots ,x_n],$$
where the sum runs on monomials $u\in K[x_1,\ldots ,x_r]\setminus (Q\cap K[x_1,\ldots ,x_r])$.
\end{Lemma}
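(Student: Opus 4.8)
The plan is to decompose $S/Q$ by separating the variables that appear in $\sqrt{Q}=(x_1,\dots,x_r)$ from the remaining ones. Write $S=R[x_{r+1},\dots,x_n]$ where $R=K[x_1,\dots,x_r]$, and correspondingly think of monomials in $S$ as pairs $(u,v)$ with $u$ a monomial in $R$ and $v$ a monomial in $K[x_{r+1},\dots,x_n]$. Since $Q$ is primary with radical $(x_1,\dots,x_r)$, a power $x_i^{a_i}$ lies in $Q$ for each $i\le r$; the key structural fact I want to exploit is that whether a monomial $uv$ belongs to $Q$ depends \emph{only} on $u$, not on $v$. This is because $Q$ is a monomial ideal whose minimal generators involve only $x_1,\dots,x_r$: indeed, if a minimal monomial generator of $Q$ had a factor $x_j$ with $j>r$, then dividing by $x_j$ would give a monomial still having a power in $\sqrt{Q}$, contradicting... — more carefully, one argues that $Q = (Q\cap R)S$, i.e. $Q$ is extended from an ideal of $R$. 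This last point is the crux and deserves a clean justification: a monomial ideal primary to $(x_1,\dots,x_r)$ in $K[x_1,\dots,x_n]$ is generated by monomials in $x_1,\dots,x_r$ alone.

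Granting that $Q=(Q\cap R)S$, the decomposition is immediate. The $K$-basis of $S/Q$ consists of all monomials $uv$ with $u$ a monomial in $R$, $v$ a monomial in $K[x_{r+1},\dots,x_n]$, and $uv\notin Q$. By the structural fact, $uv\notin Q$ if and only if $u\notin Q\cap R$, with no constraint on $v$. Hence
$$S/Q \;=\; \bigoplus_{\substack{u\in R\ \text{monomial}\\ u\notin Q\cap R}} uK[x_{r+1},\dots,x_n]$$
as $K$-vector spaces, and each summand $uK[x_{r+1},\dots,x_n]$ is a free $K[x_{r+1},\dots,x_n]$-module (it is a faithful rank-one free module since $u$ is a nonzerodivisor on the polynomial ring in the disjoint set of variables). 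This is exactly a Stanley decomposition of the asserted form.

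The one point requiring care is the claim that $Q$ is extended from $R=K[x_1,\dots,x_r]$. I would prove this by taking the (finite) set $G(Q)$ of minimal monomial generators of $Q$ and showing each lies in $R$. Suppose $w\in G(Q)$ and write $w = u\cdot x_j^{c}$ with $j>r$, $c\ge 1$, and $u$ not divisible by $x_j$. Since $\sqrt{Q}=(x_1,\dots,x_r)$, we have $x_j\notin\sqrt{Q}$, so no power of $x_j$ lies in $Q$; more to the point, consider the monomial $w/x_j$. It suffices to show $w/x_j\in Q$, contradicting minimality of $w$. Using primary-ness: $w\in Q$ and $x_j\notin\sqrt{Q}$ forces $w/x_j\in Q$ by the definition of a primary ideal applied to the factorization $x_j\cdot(w/x_j)=w\in Q$. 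Iterating removes all variables $x_j$ with $j>r$ from every generator, so $G(Q)\subseteq R$ and $Q=(Q\cap R)S$. I expect this extraction argument to be the only genuine obstacle; everything after it is the routine bookkeeping of reading off the monomial basis.
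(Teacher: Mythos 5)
Your proof is correct and takes essentially the same approach as the paper's: both decompose $S/Q$ by its monomial $K$-basis, splitting each monomial $\alpha\notin Q$ into its $K[x_1,\dots,x_r]$-part $u$ and its $K[x_{r+1},\dots,x_n]$-part $f$. The one place you are more careful is in isolating and proving the structural fact $Q=(Q\cap K[x_1,\dots,x_r])S$ via primariness, which is exactly what is needed to justify the reverse containment that the paper dismisses as ``the other inclusion being trivial.''
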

\begin{proof}
Given $u,v \in K[x_1,\ldots,x_r]\setminus (Q\cap K[x_1,\ldots,x_r])$ and $h,g \in K[x_{r+1},\ldots,x_n]$  with $uh=vg$  then we get $u=v$, $g=h$. Thus the given sum is direct.
 Note that there exist just a finite number of monomials in $K[x_1,\ldots ,x_r]\setminus (Q\cap K[x_1,\ldots ,x_r])$. Let $0\neq \alpha \in (S\setminus Q)$ be a monomial.
 Then $\alpha=uf$, where $f\in K[x_{r+1},\ldots,x_n]$ and $u\in K[x_1,\ldots,x_r]$.
Since $\alpha\not \in Q$ we have $u\not\in Q$. Thus $S/Q\subset \oplus uK[x_{r+1},\ldots,x_n]$, the other inclusion being trivial.
\end{proof}
\begin{Lemma}\label{l2}
Let  $Q$ be a monomial primary ideal in $S=K[x_1,\ldots,x_n]$.  Then
$\sdepth S/Q= \dim S/Q=\depth S/Q$.
\end{Lemma}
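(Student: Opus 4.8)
The plan is to use Lemma \ref{l1} to pin down $\sdepth S/Q$ exactly, and then invoke standard facts about primary monomial quotients for the depth. After a change of notation we may assume $\sqrt{Q}=(x_1,\dots,x_r)$, so that Lemma \ref{l1} furnishes a Stanley decomposition $S/Q=\bigoplus_u uK[x_{r+1},\dots,x_n]$ in which \emph{every} summand is a free module over the same polynomial ring $K[x_{r+1},\dots,x_n]$ in $n-r$ variables. Hence this particular decomposition has $\sdepth = n-r = \dim S/Q$, which gives $\sdepth S/Q \geq \dim S/Q$. For the reverse inequality one uses the general fact that the Stanley depth of any nonzero finitely generated multigraded module is bounded above by its Krull dimension: in any Stanley decomposition a summand $zK[Z]$ is a submodule, so $|Z| = \dim zK[Z] \leq \dim S/Q$, whence $\sdepth S/Q \leq \dim S/Q$. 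Combining the two inequalities yields $\sdepth S/Q = \dim S/Q$.

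It remains to identify $\depth S/Q$ with $\dim S/Q$. Since $Q$ is primary with $\sqrt Q=(x_1,\dots,x_r)=\mm\cap K[x_1,\dots,x_r]$, the variables $x_{r+1},\dots,x_n$ act on $S/Q$ as a regular sequence: indeed $Q\cap K[x_{r+1},\dots,x_n]$ has radical contained in the nilradical, but $K[x_{r+1},\dots,x_n]$ is a domain, so no $x_j$ with $j>r$ lies in any associated prime of $S/Q$ (the only associated prime of the $(x_1,\dots,x_r)$-primary ideal $Q$ is $(x_1,\dots,x_r)$ itself). Thus $\depth S/Q\geq n-r$. On the other hand $\depth$ is always bounded above by $\dim$, so $\depth S/Q = n-r = \dim S/Q$. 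Concretely, $S/Q$ is a free $K[x_{r+1},\dots,x_n]$-module of finite rank by Lemma \ref{l1}, hence Cohen--Macaulay of dimension $n-r$, which settles the depth claim cleanly.

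The only genuinely nontrivial ingredient is the upper bound $\sdepth M \leq \dim M$ for multigraded modules; this is classical (it follows at once from the definition since each Stanley space is a submodule and $\dim zK[Z]=|Z|$), and I would simply cite it or dispatch it in one line. Everything else is either a direct appeal to Lemma \ref{l1} or a standard property (depth $\leq$ dim, freeness implies Cohen--Macaulay). So I do not anticipate a real obstacle; the content of the lemma is essentially that Lemma \ref{l1} already exhibits an optimal Stanley decomposition of a primary monomial quotient.
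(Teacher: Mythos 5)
Your argument follows the paper's route: Lemma~\ref{l1} exhibits a Stanley decomposition of $S/Q$ in which every summand is $uK[x_{r+1},\ldots,x_n]$, so $\sdepth S/Q\geq n-r=\dim S/Q$; combine this with the general inequality $\sdepth\leq\dim$; and observe that $S/Q$ is Cohen--Macaulay to get $\depth S/Q=\dim S/Q$. The paper cites Apel for the upper bound and asserts CM-ness without comment; your added observation that $S/Q$ is free of finite rank over $K[x_{r+1},\ldots,x_n]$ (again by Lemma~\ref{l1}) is a clean, self-contained way to see both the CM-ness and $\depth S/Q=n-r$.

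The one genuine flaw is the parenthetical reason you offer for $\sdepth M\leq\dim M$. A Stanley space $zK[Z]$ is only a $K$-linear subspace of $M$ (and a free $K[Z]$-module); it is in general \emph{not} an $S$-submodule of $M$, since multiplication by a variable outside $Z$ need not stay inside $zK[Z]$. For instance, in $K[x_1,x_2]=K[x_1]\oplus x_2K[x_1,x_2]$ the piece $K[x_1]$ is not an ideal, and already in the decomposition of $S/Q$ from Lemma~\ref{l1} the summand $uK[x_{r+1},\ldots,x_n]$ with $u\neq 1$ is typically not closed under multiplication by $x_1,\ldots,x_r$. So the inference $|Z|=\dim zK[Z]\leq\dim S/Q$ ``because it is a submodule'' does not go through. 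The standard quick proof compares pole orders of the $\ZZ$-graded Hilbert series at $t=1$: a Stanley decomposition gives $H_M(t)=\sum_i t^{a_i}/(1-t)^{|Z_i|}$ with nonnegative contributions, so the pole order is $\max_i|Z_i|$, which must equal $\dim M$; hence $\sdepth\mathcal{D}=\min_i|Z_i|\leq\dim M$. You do say you would cite the inequality (which is what the paper does, to \cite{A}), and that is fine; but the one-line justification you propose as an alternative is not correct.
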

\begin{proof}
Let $\dim S/Q=n-r$ for some $0\leq r\leq n$.
We have $\dim S/Q\geq \sdepth S/Q$ by \cite[Theorem 2.4]{A}. Renumbering variables we may suppose that $\sqrt{Q}=(x_1,\ldots ,x_r)$.
Using the above lemma we get the converse inequality.
As $S/Q$ is  Cohen Macaulay it follows
$\dim S/Q=\depth S/Q$, which is enough.
\end{proof}
\begin{Lemma}\label{l3}
Let $I$,$J$ be two monomial ideals of $S=K[x_1,\ldots,x_n]$. Then
$$\sdepth(S/(I\cap J))\geq
\max\{\min\{\sdepth(S/I),\sdepth(I/(I\cap J))\},$$ $$\min\{\sdepth(S/J),\sdepth(J/(I\cap J))\}\}.$$
\end{Lemma}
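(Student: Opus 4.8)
The plan is to derive the inequality from the short exact sequence of $\ZZ^n$-graded $S$-modules
\[
0\To I/(I\cap J)\To S/(I\cap J)\To S/I\To 0,
\]
in which the first map is the inclusion (note $I\cap J\subseteq I$, so $I/(I\cap J)$ is a submodule of $S/(I\cap J)$) and the second is the canonical projection. All three modules are multigraded, and on the level of $\ZZ^n$-graded $K$-vector spaces this sequence splits: the monomials of $S$ not lying in $I\cap J$ split into those that lie in $I$, which form a $K$-basis of $I/(I\cap J)$, and those that do not lie in $I$, which form a $K$-basis of $S/I$. Hence
\[
S/(I\cap J)\cong \bigl(I/(I\cap J)\bigr)\dirsum (S/I)
\]
as $\ZZ^n$-graded $K$-vector spaces.

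Next I would observe that, given a Stanley decomposition $\mathcal D_1$ of $I/(I\cap J)$ and a Stanley decomposition $\mathcal D_2$ of $S/I$, their union is a finite direct sum of Stanley spaces whose underlying $K$-vector space is precisely $S/(I\cap J)$ by the displayed decomposition; thus it is a Stanley decomposition of $S/(I\cap J)$, and evidently $\sdepth(\mathcal D_1\dirsum\mathcal D_2)=\min\{\sdepth\mathcal D_1,\sdepth\mathcal D_2\}$. Choosing $\mathcal D_1$ and $\mathcal D_2$ to realize $\sdepth(I/(I\cap J))$ and $\sdepth(S/I)$ respectively, we get
\[
\sdepth\bigl(S/(I\cap J)\bigr)\ge\min\{\sdepth(S/I),\sdepth(I/(I\cap J))\}.
\]

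Finally, since $I\cap J=J\cap I$, the same argument with the roles of $I$ and $J$ interchanged yields $\sdepth(S/(I\cap J))\ge\min\{\sdepth(S/J),\sdepth(J/(I\cap J))\}$, and taking the maximum of the two lower bounds gives the assertion. There is no serious obstacle; the only point needing a little care is the verification that the concatenation of two Stanley decompositions is again a Stanley decomposition, which is immediate once the vector-space direct sum above is in hand. In other words, the whole proof rests on the familiar splitting principle $\sdepth M\ge\min\{\sdepth N,\sdepth M/N\}$ for a multigraded submodule $N\subseteq M$, applied twice.
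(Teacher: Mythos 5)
Your proof is correct and follows essentially the same route as the paper: the same short exact sequence $0\to I/(I\cap J)\to S/(I\cap J)\to S/I\to 0$ (and its mirror with $I$ and $J$ swapped), combined with the splitting principle $\sdepth M\ge\min\{\sdepth N,\sdepth M/N\}$. The only difference is that the paper simply cites this principle as Lemma 2.2 of \cite{R}, whereas you unpack the vector-space splitting that makes it work in this case.
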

\begin{proof}
Consider the following exact sequence of $S$-modules.
$$0\rightarrow I/(I\cap J) \to S/(I\cap J) \to S/I \to 0.$$
By  \cite[Lemma 2.2]{R}, we have
\begin{equation}
\label{1}
\sdepth(S/(I\cap J))\geq \min\{\sdepth(S/I),\sdepth(I/(I\cap J))\}.
\end{equation}
Similarly, we get
\begin{equation}\label{2}
\sdepth(S/(I\cap J))\geq \min\{\sdepth(S/J),\sdepth(J/(I\cap J))\}.
\end{equation}
The proof ends using (\ref{1}) and (\ref{2}).
\end{proof}
\begin{Proposition}\label{low}
Let  $Q$, $Q'$ be two monomial primary ideals in $S=K[x_1,\ldots,x_n]$ with different associated prime ideals. Suppose that
  $\sqrt{Q}=(x_1,\ldots ,x_t)$,  $\sqrt{Q'}=(x_{r+1},\ldots ,x_n)$ for some integers $t,r$ with $0\leq r\leq t\leq n$.
 Then $\sdepth(S/(Q\cap Q'))\geq$
\begin{multline*}
\max\{\min_v \{r, \sdepth(Q'\cap K[x_{t+1},\ldots, x_n]),\sdepth((Q':v)\cap K[x_{t+1},\ldots, x_n])\}, \\ \;\;\;\;\;\;\;
\min_w \{n-t, \sdepth(Q\cap K[x_{1},\ldots, x_r]),\sdepth((Q:w)\cap K[x_{1},\ldots, x_r])\}\},
\end{multline*}
\\where $v$,$w$ run in the set of monomials  containing only variables from $\{x_{r+1},\ldots,x_t\}$,  $w\not\in Q$, $v\not\in Q'$.
\end{Proposition}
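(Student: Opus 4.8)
The strategy is to apply Lemma \ref{l3} to $I = Q$, $J = Q'$, and then analyze the four modules $S/Q$, $Q/(Q\cap Q')$, $S/Q'$, $Q'/(Q\cap Q')$ that appear. By Lemma \ref{l3} it suffices to bound $\sdepth(Q'/(Q\cap Q'))$ from below by the first inner minimum and $\sdepth(Q/(Q\cap Q'))$ by the second, since by Lemma \ref{l2} we have $\sdepth(S/Q) = \dim S/Q = n-t$ and $\sdepth(S/Q') = \dim S/Q' = r$, and these dominate the respective first entries $r$ and $n-t$ of the minima (because $r \le t$ forces $r \le n-t$ only... wait—one must check $n-t \le \dim S/Q' = r$? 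No: we need $\min\{\sdepth(S/Q'), \sdepth(Q'/(Q\cap Q'))\}$, and $\sdepth(S/Q') = r$, so the first minimum is automatically $\le r$; similarly the second is $\le n-t$). So the content is entirely in estimating the Stanley depths of the two ``difference'' modules $Q'/(Q\cap Q')$ and $Q/(Q\cap Q')$.

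By symmetry, I would focus on $Q'/(Q\cap Q')$. The key isomorphism is $Q'/(Q\cap Q') \cong (Q' + Q)/Q$ as $S$-modules, which realizes this module inside $S/Q$. Now I would use Lemma \ref{l1}: since $\sqrt{Q} = (x_1,\ldots,x_t)$, we have the Stanley decomposition $S/Q = \bigoplus_u uK[x_{t+1},\ldots,x_n]$ where $u$ ranges over monomials in $K[x_1,\ldots,x_t]\setminus Q$. The submodule $(Q'+Q)/Q$ is a monomial submodule of $S/Q$, so it inherits a decomposition: for each such $u$, the piece of $(Q'+Q)/Q$ lying in $uK[x_{t+1},\ldots,x_n]$ is $u\cdot(\text{something})$, and I would identify that ``something'' with an ideal in $K[x_{t+1},\ldots,x_n]$ of the form $(Q':u)\cap K[x_{t+1},\ldots,x_n]$ — this is exactly where the colon ideals in the statement come from, with $u$ playing the role of $w$... no, of $v$. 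Splitting according to whether $u$ involves only variables from $\{x_1,\ldots,x_r\}$ (where $Q' \subseteq (x_{r+1},\ldots,x_n)$ means $u \notin Q'$ automatically and $(Q':u) = Q'$ contributes $\sdepth(Q'\cap K[x_{t+1},\ldots,x_n])$ but with an extra $x_{r+1},\ldots,x_t$-freedom... ) requires care; the monomials $v$ in the statement are precisely those $u$ that involve only variables from $\{x_{r+1},\ldots,x_t\}$, i.e. the ``mixed'' part, and for the purely-$x_1,\dots,x_r$ part one gets the summand of depth $\ge r$.

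Concretely: write $u = u_1 u_2$ with $u_1 \in K[x_1,\ldots,x_r]$ and $u_2 \in K[x_{r+1},\ldots,x_t]$; since $Q' \subseteq (x_{r+1},\ldots,x_n)$ and $Q'$ is primary, $(Q':u) = (Q':u_2)$ depends only on $u_2 =: v$, and is $Q'$ itself when $v = 1$. Assembling the pieces, $(Q'+Q)/Q$ decomposes as a direct sum over admissible $v$ of free modules over $K[x_{t+1},\ldots,x_n]$ twisted by Stanley decompositions of $(Q':v)\cap K[x_{t+1},\ldots,x_n]$, each such summand carrying an additional polynomial freedom in the variables $x_1,\ldots,x_r$ that did not appear in $u$ — this contributes the term $r$ and the Stanley depth of this whole module is therefore at least $\min_v\{r,\ \sdepth((Q':v)\cap K[x_{t+1},\ldots,x_n])\}$, where $v=1$ gives the term $\sdepth(Q'\cap K[x_{t+1},\ldots,x_n])$. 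The main obstacle is bookkeeping the variable partition correctly — ensuring that the ``leftover'' variables $x_1,\ldots,x_r$ genuinely appear freely in every summand (so that one may add $r$ to each local depth), and that the colon ideal is correctly computed as living in $K[x_{t+1},\ldots,x_n]$ rather than a larger ring; once the decomposition is written down, taking minima and invoking Lemma \ref{l3} finishes it, the second half following by the symmetric argument with the roles of $Q,Q'$ and $t, n-r$ swapped.
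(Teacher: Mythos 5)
Your proposal follows the same overall route as the paper: apply Lemma \ref{l3} to reduce to bounding $\sdepth(Q'/(Q\cap Q'))$, realize $Q'/(Q\cap Q')$ inside $S/Q$, invoke Lemma \ref{l1} to get the decomposition of $S/Q$ indexed by monomials $u\in K[x_1,\ldots,x_t]\setminus Q$, intersect with $Q'$ to obtain pieces of the form $u\bigl((Q':u)\cap K[x_{t+1},\ldots,x_n]\bigr)$, and observe that $(Q':u)$ depends only on the $K[x_{r+1},\ldots,x_t]$-part $v$ of $u$ because $x_1,\ldots,x_r$ are regular modulo $Q'$. That is exactly the paper's argument, and Lemma \ref{l2} for $\sdepth(S/Q')=r$ supplies the first entry of the inner minimum via Lemma \ref{l3}, just as you note at the outset.

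However, the sentence claiming that ``each such summand carr[ies] an additional polynomial freedom in the variables $x_1,\ldots,x_r$ \ldots\ this contributes the term $r$'' is both wrong and unnecessary. In the decomposition coming from Lemma \ref{l1}, every Stanley space of $Q'/(Q\cap Q')$ has its variable set $Z$ contained in $\{x_{t+1},\ldots,x_n\}$; no variables from $\{x_1,\ldots,x_r\}$ can be adjoined to those spaces. One cannot collect the pieces over all $u_1\in K[x_1,\ldots,x_r]$ with fixed $v$ into a single free $K[x_1,\ldots,x_r]$-module, because the index set $\{u_1 : u_1v\notin Q\}$ is in general not closed under multiplication (take, e.g., $Q=(x_1^2,x_2^2,x_1x_2)$). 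What saves the argument is precisely the observation you made earlier in the plan: the term $r$ in the stated bound is accounted for by $\sdepth(S/Q')=r$ in Lemma \ref{l3}, so for $\sdepth(Q'/(Q\cap Q'))$ you only need the (correct) bound $\min_v\sdepth\bigl((Q':v)\cap K[x_{t+1},\ldots,x_n]\bigr)$, not one involving $r$. If you delete the polynomial-freedom claim and simply assert that the induced decomposition of $Q'/(Q\cap Q')$ has Stanley depth at least this minimum (with the $v\in Q'$ pieces being free of rank one over $K[x_{t+1},\ldots,x_n]$, hence of dimension $n-t\ge$ that minimum), the proof is complete and matches the paper's.
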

\begin{proof}
If $Q$, or $Q'$ is zero then  the inequality holds trivially. If $r=0$  then \\
$Q\cap  K[x_{1},\ldots, x_r]=Q\cap K=0$,  and  the  inequality is clear. A similar case is $t=n$. Thus we may suppose $1\leq r\leq t<n$.
Applying Lemma \ref{l3} it is enough to show that $\sdepth(Q'/(Q\cap Q'))\geq$
$$\min \{\sdepth(Q'\cap K[x_{t+1},\ldots x_n]),\sdepth((Q':v)\cap K[x_{t+1},\ldots x_n])\},$$
where $v$ is a monomial of $K[x_{r+1},\ldots, x_n]\setminus (Q\cap Q')$.  We have a canonical injective map $Q'/(Q\cap Q')\to S/Q$.
By Lemma \ref{l1} we get
$$Q'/(Q\cap Q')=Q'\cap (\oplus uK[x_{t+1},\ldots,x_n])=\oplus (Q'\cap uK[x_{t+1},\ldots,x_n]),$$
where $u$ runs in the monomials of $K[x_1,\ldots,x_{t}]\setminus Q$.
Here
$Q'\cap uK[x_{t+1},\ldots,x_n]=u(Q'\cap K[x_{t+1},\ldots,x_n])$ if $u\in K[x_1,\ldots,x_r]$
and $Q'\cap uK[x_{t+1},\ldots,x_n]=u((Q':u)\cap K[x_{t+1},\ldots,x_n])$ if $u\not\in K[x_1,\ldots,x_r]$.
If $u\in Q'$ then $Q':u=S$. We have $$Q'/(Q\cap Q')=
(\oplus u(Q'\cap K[x_{t+1},\ldots,x_n]))\oplus$$
$$ (\oplus z K[x_{t+1},\ldots,x_n])\oplus (\oplus uv((Q':v)\cap K[x_{t+1},\ldots,x_n])),$$
where the sum runs for all monomials  $u\in (K[x_1,\ldots,x_r]\setminus Q)$ ,  $z\in Q'\setminus Q$ and $v\in K[x_{r+1},\ldots,x_t]$, $v\not\in Q'\cup Q$.
Now it is enough to apply \cite[Lemma 2.2]{R} to get the above inequality.
\end{proof}

\begin{Theorem}\label{Low}
Let $Q$ and $Q'$ be two irreducible monomial ideals of $S$. Then $\sdepth_S S/(Q\cap Q')\geq$
\begin{multline*}
 \max\{\min\{\dim(S/Q'),\lceil\frac{\dim(S/Q)+\dim(S/(Q+Q'))}{2}\rceil\},\\
 \min\{\dim(S/Q),\lceil\frac{\dim(S/Q')+\dim(S/(Q+Q'))}{2}\rceil\}\}.
\end{multline*}
\end{Theorem}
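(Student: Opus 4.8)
The plan is to derive the inequality from Proposition~\ref{low} by a judicious choice of variables, after first reducing to the case $\dim S/(Q+Q')=0$.

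\emph{Step 1: reduction to $\dim S/(Q+Q')=0$.} Let $x_1,\ldots,x_m$ be, after renumbering, the variables occurring in $\sqrt{Q}\cup\sqrt{Q'}$, put $\bar S=K[x_1,\ldots,x_m]$, $\bar Q=Q\cap\bar S$, $\bar Q'=Q'\cap\bar S$. Then $\bar Q,\bar Q'$ are irreducible monomial ideals of $\bar S$ with $Q=\bar Q S$, $Q'=\bar Q'S$, $Q\cap Q'=(\bar Q\cap\bar Q')S$, and $\dim\bar S/(\bar Q+\bar Q')=0$. I would use the standard inequality $\sdepth_S(S/(JS))\ge\sdepth_{\bar S}(\bar S/J)+(n-m)$ for a monomial ideal $J\subset\bar S$ — which follows at once by adjoining $x_{m+1},\ldots,x_n$ to every $K[Z_i]$ in a Stanley decomposition of $\bar S/J$. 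Since $\dim S/Q=\dim\bar S/\bar Q+(n-m)$, $\dim S/Q'=\dim\bar S/\bar Q'+(n-m)$ and $\dim S/(Q+Q')=n-m$, a short computation with the ceiling function shows that the right-hand side of the theorem for $(S,Q,Q')$ equals that for $(\bar S,\bar Q,\bar Q')$ plus $n-m$; hence it suffices to treat the case $\dim S/(Q+Q')=0$.

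\emph{Step 2: the case $\dim S/(Q+Q')=0$.} If $\sqrt{Q}=\sqrt{Q'}$ then both equal $\mm$, $Q,Q'$ are $\mm$-primary, the right-hand side is $0$, and there is nothing to prove. Otherwise (zero ideals being trivial as in the proof of Proposition~\ref{low}) renumber so that $\sqrt{Q}=(x_1,\ldots,x_t)$, $\sqrt{Q'}=(x_{r+1},\ldots,x_n)$ with $0\le r\le t\le n$; then $\dim S/Q=n-t$, $\dim S/Q'=r$, and discarding the trivial cases $r=0$ and $t=n$ (bound $0$) I apply Proposition~\ref{low}. Since $Q$ is irreducible, $Q=(x_1^{a_1},\ldots,x_t^{a_t})$ with all $a_i\ge1$; for any monomial $w$ in $x_{r+1},\ldots,x_t$ with $w\notin Q$ the ideal $(Q:w)$ has the same shape with unchanged exponents on $x_1,\ldots,x_r$, so
\[
Q\cap K[x_1,\ldots,x_r]=(Q:w)\cap K[x_1,\ldots,x_r]=(x_1^{a_1},\ldots,x_r^{a_r})
\]
inside $K[x_1,\ldots,x_r]$, and symmetrically $Q'\cap K[x_{t+1},\ldots,x_n]=(Q':v)\cap K[x_{t+1},\ldots,x_n]=(x_{t+1}^{c_{t+1}},\ldots,x_n^{c_n})$ in $K[x_{t+1},\ldots,x_n]$ for every valid $v$. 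These are complete intersection monomial ideals generated by pure powers of all the variables of the ambient ring, so by \cite{C2} their Stanley depths are $\lceil r/2\rceil$ and $\lceil(n-t)/2\rceil$. Substituting into Proposition~\ref{low} — the minima over $v$ and $w$ are harmless, as every such $v$, resp.\ $w$, gives the same restricted ideal — yields precisely
\[
\sdepth_S S/(Q\cap Q')\ \ge\ \max\{\min\{r,\lceil\tfrac{n-t}{2}\rceil\},\ \min\{n-t,\lceil\tfrac{r}{2}\rceil\}\},
\]
which is the asserted bound.

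\emph{Where the difficulty lies.} The combinatorics of the final bound is just an evaluation of Proposition~\ref{low}; the genuine inputs are (i) recognizing that every colon-and-restriction ideal occurring in that proposition is again a complete intersection generated by pure powers of \emph{all} the remaining variables, so that nothing is lost in passing to the minima, and (ii) the value $\lceil k/2\rceil$ for the Stanley depth of such an ideal in a $k$-variable polynomial ring, imported from \cite{C2} (which specializes the maximal-ideal case of \cite{Bi}). The reduction in Step~1 is routine, but must be done carefully so that the ceilings add up correctly.
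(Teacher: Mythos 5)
Your proof is correct and follows essentially the same route as the paper's: both reduce to the case $\dim S/(Q+Q')=0$ via \cite[Lemma 3.6]{HVZ} and then evaluate Proposition~\ref{low} using the formula $\lceil k/2\rceil$ for the Stanley depth of an irreducible ideal generated by pure powers of all $k$ variables (imported from \cite{C2}, \cite{Sh}). The only superficial difference is that you perform the reduction first and note that $(Q:w)\cap K[x_1,\ldots,x_r]$ is literally the same ideal for every admissible $w$, whereas the paper proves the $p=n$ case first and observes only that these ideals have equal sdepth because they are irreducible with identical variable counts.
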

\begin{proof}If the associated prime ideals of $Q,Q'$ are the same then the above inequality says that $\sdepth_S S/(Q\cap Q')\geq \dim S/Q$, which follows from Lemma \ref{l2}. Thus we may suppose that the associated prime ideals of $Q,Q'$ are different.
We may suppose that $Q$ is generated in variables $\{x_1,\ldots,x_t\}$ and $Q'$ is generated in variables $\{x_{r+1},\ldots,x_p\}$ for some integers $0\leq r\leq t\leq p\leq n$. Since  $\dim(S/Q)=n-t$, $\dim(S/Q')=n-p+r$ and $\dim(S/(Q+ Q'))=n-p$ we get
$$n-t-\lfloor\frac{p-t}{2}\rfloor=\lceil\frac{(n-t)+(n-p)}{2}\rceil=\lceil\frac{\dim(S/Q)+\dim(S/(Q+Q'))}{2}\rceil,$$
$\lfloor\frac{a}{2}\rfloor$
being the biggest integer $\leq a/2$, $a\in {\bf Q}.$
Similarly, we have
$$n-p+r-\lfloor\frac{r}{2}\rfloor=\lceil\frac{\dim(S/Q')+\dim(S/(Q+Q'))}{2}\rceil.$$
On the other hand by \cite{C2}, and \cite[Theorem 2.4]{Sh} $\sdepth(Q'\cap K[x_{t+1},\ldots,x_n])=n-t-\lfloor\frac{p-t}{2}\rfloor$
and $\sdepth(Q\cap K[x_1,\ldots,x_r,x_{p+1},\ldots,x_n])=n-p+r-\lfloor\frac{r}{2}\rfloor$.
In fact, the quoted result  says in particular that sdepth of each irreducible ideal $L$ depends only on the number of variables of the ring and the number of variables generating $L$ (a description of irreducible monomial ideals is given in \cite{Vi}).
Since $(Q':v)\cap K[x_{t+1},\ldots,x_n]$ is still an irreducible ideal generated by the same variables as $Q'$ we conclude that $$\sdepth((Q':v)\cap K[x_{t+1},\ldots,x_n])=\sdepth(Q'\cap K[x_{t+1},\ldots,x_n]),$$ $v\not \in Q'$ being any monomial.
Similarly, $$\sdepth((Q:w)\cap K[x_1,\ldots,x_r,x_{p+1},\ldots,x_n])=\sdepth(Q\cap K[x_1,\ldots,x_r,x_{p+1},\ldots,x_n]).$$
It follows that our inequality holds if $p=n$ by Proposition \ref{low}.

 Set $S'=K[x_1,\ldots,x_p]$, $q=Q\cap S'$, $q'=Q'\cap S'$. As above (case $p=n$) we get
$\sdepth_{S'} S'/(q\cap q')\geq$
$$\max\{\min\{\dim(S'/q'),\lceil\frac{\dim(S'/q)}{2}\rceil\},\min\{\dim(S'/q),\lceil\frac{\dim(S'/q')}{2}\rceil\}\}=$$
$$\max\{\min\{r,\lceil\frac{p-t}{2}\rceil\},\min\{p-t,\lceil\frac{r}{2}\rceil\}\}.$$
Using   \cite[Lemma 3.6]{HVZ}, we have
$$\sdepth_S (S/(Q\cap Q'))=\sdepth_S (S/(q\cap q')S)=n-p+\sdepth_{S'} (S'/(q\cap q')).$$
It follows that
$$\sdepth_S (S/(Q\cap Q'))\geq n-p+\max\{\min\{r,\lceil\frac{p-t}{2}\rceil\},\min\{p-t,\lceil\frac{r}{2}\rceil\}\}=$$
$$\max\{\min\{n-p+r,n-p+\lceil\frac{p-t}{2}\rceil\},\min\{n-t,n-p+\lceil\frac{r}{2}\rceil\}\}=$$
$$\max\{\min\{n-p+r,n-t-\lfloor\frac{p-t}{2}\rfloor\},\min\{n-t,n-p+r-\lfloor\frac{r}{2}\rfloor\}\},$$
which is enough.
\end{proof}

\section{An upper bound for Stanley's depth of some cycle modules}

Let $Q$,$Q'$ be two monomial primary ideals of $S$. Suppose that $Q$ is generated in variables $\{x_1,\ldots,x_t\}$ and $Q'$ is generated in variables $\{x_{r+1},\ldots,x_n\}$ for some integers $1\leq r\leq t<n$.
Thus the prime ideals associated to $Q\cap Q'$ have dimension $\geq 1$ and it follows $\depth(S/(Q\cap Q'))\geq 1$. Then $\sdepth(S/(Q\cap Q'))\geq 1$ by \cite[Corollary 1.6]{C1}, or \cite[Theorem 1.4]{C3}.
Let $\mathcal{D}: \;\;\;\ S/(Q\cap Q')= \oplus _{i=1} ^s u_i K[Z_i]$
be a Stanley decomposition of $S/(Q\cap Q')$ with $\sdepth \mathcal{D}=\sdepth(S/(Q\cap Q'))$. Thus $|Z_i|\geq 1$ for all $i$.
Renumbering $(u_i,Z_i)$ we may suppose that $1\in u_1 K[Z_1]$, so $u_1 =1$.
Note that $Z_i$  cannot have mixed variables from $\{x_1,\ldots,x_r\}$ and $\{x_{t+1},\ldots,x_n\}$ because otherwise $u_i K[Z_i]$ will be not a free $K[Z_i]$-module. As $\mid Z_1\mid \geq 1$ we may have either $Z_1\subset \{x_1,\ldots,x_r\}$ or $Z_1\subset \{x_{t+1},\ldots,x_n\}.$
\begin{Lemma}\label{l4}
Suppose $Z_1\subset \{x_1,\ldots,x_r\}$. Then $\sdepth(\mathcal{D})\leq \min\{r,\lceil\frac{n-t}{2}\rceil\}.$
\end{Lemma}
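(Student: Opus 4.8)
The bound $\sdepth(\mathcal D)\le r$ is immediate: from $1\in u_1K[Z_1]$ we get $u_1=1$, and the hypothesis gives $Z_1\subseteq\{x_1,\dots,x_r\}$, so $\sdepth(\mathcal D)\le|Z_1|\le r$.

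For the bound $\sdepth(\mathcal D)\le\lceil\frac{n-t}2\rceil$ the plan is first to sharpen the information on the sets $Z_i$. If $r<j\le t$ then $x_j\in\sqrt Q\cap\sqrt{Q'}$, so a high power of $x_j$ lies in $Q\cap Q'$; thus $x_j$ is nilpotent in $S/(Q\cap Q')$ and cannot belong to any $Z_i$, since $u_iK[Z_i]$ is a free $K[Z_i]$-module. Combined with the ``no mixed variables'' remark above, this forces, for every $i$, either $Z_i\subseteq\{x_1,\dots,x_r\}$ or $Z_i\subseteq\{x_{t+1},\dots,x_n\}$; let $\mathcal D_1$, resp. $\mathcal D_2$, collect the indices of the two kinds, so $1\in\mathcal D_1$.

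Next I would restrict the decomposition to the new variables. Every monomial of $K[x_{t+1},\dots,x_n]$ is nonzero in $S/(Q\cap Q')$ (it is not in $Q$), so these monomials span a $K[x_{t+1},\dots,x_n]$-submodule $L$ of $S/(Q\cap Q')$ which is free of rank one. Intersecting $\mathcal D$ with $L$ yields a Stanley decomposition $L=\bigoplus_i\bigl(u_iK[Z_i]\cap L\bigr)$; by the dichotomy, a summand is nonzero only if $u_i\in K[x_{t+1},\dots,x_n]$, in which case it equals $Ku_i$ (dimension $0$) for $i\in\mathcal D_1$ — in particular the summand containing $1$ is $K\cdot1$ — and it equals $u_iK[Z_i]$ (dimension $|Z_i|\ge\sdepth(\mathcal D)$) for $i\in\mathcal D_2$. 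Deleting $K\cdot1$ leaves a Stanley decomposition of the ideal $(x_{t+1},\dots,x_n)\subseteq K[x_{t+1},\dots,x_n]$ in which every positive-dimensional summand has dimension $\ge\sdepth(\mathcal D)$. Since by \cite{Bi} the Stanley depth of this ideal is $\lceil\frac{n-t}2\rceil$, it now suffices to know that a Stanley decomposition of the maximal ideal of $K[x_{t+1},\dots,x_n]$ cannot have \emph{all} of its positive-dimensional summands of dimension $>\lceil\frac{n-t}2\rceil$; this gives $\sdepth(\mathcal D)\le\lceil\frac{n-t}2\rceil$ and completes the proof.

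The one delicate point is precisely this last assertion: that the small-dimensional requirement cannot be ``hidden'' in the zero-dimensional summands. I expect to reach it by passing (after replacing $\mathcal D$ by a reduced decomposition with the same Stanley depth, via \cite{HVZ}) to the interval partition of the Boolean lattice $2^{\{t+1,\dots,n\}}$ obtained by restricting $\mathcal D$ further to the squarefree monomials in $x_{t+1},\dots,x_n$: there each $\mathcal D_2$-summand contributes an interval $[\supp u_i,\ \supp u_i\cup Z_i]$ of length exactly $|Z_i|$, each $\mathcal D_1$-summand contributes a singleton, and $\{\emptyset\}$ is one of the singletons. An interval partition of $2^{\{t+1,\dots,n\}}$ with the empty set isolated and every non-trivial interval of length $>\lceil\frac{n-t}2\rceil$ is impossible — this is the combinatorics underlying the computation of the Stanley depth of the maximal ideal of a polynomial ring (roughly: the generators of the non-trivial intervals would generate an ideal cofinite in the ring, hence primary to the irrelevant ideal, which forces two of the associated Stanley spaces to overlap). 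This combinatorial core is the only genuinely nontrivial step; everything else is bookkeeping.
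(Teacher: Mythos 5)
The bound $\sdepth(\mathcal D)\le r$ and the reduction showing that each nonzero $Z_i$ lies entirely in $\{x_1,\dots,x_r\}$ or in $\{x_{t+1},\dots,x_n\}$ are fine and agree with the paper. The difficulty is the second bound $\sdepth(\mathcal D)\le\lceil\frac{n-t}{2}\rceil$, and here your approach has a genuine gap at exactly the point you flag as ``delicate.''

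You restrict $\mathcal D$ to the $K$-linear span $L$ of all monomials of $K[x_{t+1},\dots,x_n]$, which forces you to keep the zero-dimensional summands $Ku_i$ coming from $\mathcal D_1$. You then hope to dispose of these via a combinatorial statement about interval partitions of $2^{\{t+1,\dots,n\}}$. That statement is not established, and as formulated it is not true: one can partition $2^{\{1,2,3,4\}}$ by $\{\emptyset\}$, the seven other singletons inside $2^{\{2,3,4\}}$, and the single interval $[\{1\},\{1,2,3,4\}]$ of length $3>\lceil 4/2\rceil$. What would actually be needed is the stronger fact that a Stanley decomposition of a polynomial ring whose zero-dimensional pieces form a \emph{finite} set cannot have all positive-dimensional pieces of dimension $>\lceil m/2\rceil$; equivalently (if the union of the positive-dimensional pieces were an ideal, which is also not justified), that $\sdepth I\le\lceil m/2\rceil$ for an arbitrary $\mathfrak m$-primary monomial ideal $I$. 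This is not a consequence of \cite{Bi}, which computes the Stanley depth of the maximal ideal itself, and it is precisely the kind of statement the paper is careful \emph{not} to need. A second, independent, problem is that passing to squarefree supports does not preserve interval lengths: a summand $u_iK[Z_i]$ with $\supp u_i\cap Z_i\ne\emptyset$ contributes an interval of length $|Z_i\setminus\supp u_i|<|Z_i|$, so the resulting interval partition need not witness $\sdepth(\mathcal D)$ at all.

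The paper's proof sidesteps both problems in one stroke. Instead of intersecting with $L$, intersect with the ideal $Q'\cap K[x_{t+1},\dots,x_n]$: the $\mathcal D_1$-pieces then contribute nothing, because for such a summand with $u_i\in K[x_{t+1},\dots,x_n]$ and $Z_i\subseteq\{x_1,\dots,x_r\}$ one would need $u_i\in Q'$, and then $u_i x_j^k\in Q\cap Q'$ for $x_j\in Z_i$ and $k\gg 0$, contradicting freeness. Having killed the singletons, one still has a monomial decomposition of a primary (not necessarily irreducible) ideal, whose Stanley depth we do not control directly. The paper therefore chooses $a$ with $x_i^a\in Q'$ for $t<i\le n$ and pulls back along $\psi\colon K[y_{t+1},\dots,y_n]\to S/(Q\cap Q')$, $y_i\mapsto x_i^a$. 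This map carries the decomposition of $Q'\cap K[x_{t+1},\dots,x_n]$ to a Stanley decomposition of the \emph{maximal} ideal $(y_{t+1},\dots,y_n)$ in which every piece has the \emph{same} dimension $|Z_j|$ as before (set $V_j=\{y_i:x_i\in Z_j\}$), so $\sdepth(\mathcal D)\le\sdepth(y_{t+1},\dots,y_n)=\lceil\frac{n-t}{2}\rceil$ by \cite{Bi}. In short, you need to replace ``restrict to all of $K[x_{t+1},\dots,x_n]$ and hope the singletons are harmless'' by ``restrict to $Q'\cap K[x_{t+1},\dots,x_n]$, where the singletons vanish,'' and replace the squarefree restriction (which shrinks dimensions) by the $a$-th power substitution $\psi$ (which preserves them and lands you in the maximal ideal).
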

\begin{proof}
Clearly $\sdepth(\mathcal{D}) \leq \mid Z_1 \mid \leq r$. Let $ a \in\mathbb{N}$ be such that $x_i ^a \in Q'$
for all $t<i\leq n $. Let $T=K[y_{t+1},\ldots,y_n]$ and $\varphi : T\longrightarrow S$ be the $K$-morphism given by
$y_i\longrightarrow x_i ^a$.
 The composition map $\psi$:
$T\longrightarrow S\longrightarrow S/(Q\cap Q')$
is injective. Note also that we may consider $Q'\cap K[x_{t+1},\ldots,x_n]\subset S/(Q\cap Q')$ since $Q\cap K[x_{t+1},\ldots,x_n]=0$. We have
\begin{multline*}
(y_{t+1},\ldots,y_n)=\psi^{-1}(Q'\cap K[x_{t+1},\ldots,x_n])=\oplus \psi^{-1}(u_j K[Z_j]\cap Q'\cap K[x_{t+1},\ldots,x_n]).
\end{multline*}
If $u_j K[Z_j]\cap Q'\cap K[x_{t+1},\ldots,x_n]\neq 0$ then $u_j\in K[x_{t+1},\ldots,x_n]$.
Also we have $Z_j\subset \{x_{t+1},\ldots,x_n\}$, otherwise $u_j K[Z_j]$ is not free over $K[Z_j]$. Moreover,
\\if $\psi^{-1}(u_j K[Z_j]\cap Q'\cap K[x_{t+1},\ldots,x_n])\neq 0$ then $u_j = x_{t+1} ^{b_{t+1}} \ldots x_n ^{b_n}$, $b_i\in \mathbb{N}$ is such that if $x_i\not\in Z_j$, $t<i\leq n$, then $a\mid b_i$, let us say $b_i=a c_i$ for some $c_i\in \mathbb{N}$ . Denote $c_i= \lceil \frac{b_i}{a}\rceil$ when $x_i \in Z_j$. We get
$$\psi^{-1}(u_j K[Z_j]\cap Q'\cap K[x_{t+1},\ldots,x_n])=y_{t+1} ^{c_{t+1}} \ldots y_n ^{c_n} K[V_j],$$
where $V_j=\{y_i:t<i\leq n,x_i\in Z_j\}$. Thus $\psi^{-1}(u_j K[Z_j]\cap Q'\cap K[x_{t+1},\ldots,x_n])$ is a Stanley space of $T$ and so $\mathcal{D}$ induces a Stanley decomposition $\mathcal{D}'$ of $(y_{t+1},\ldots,y_n)$ such that $\sdepth(\mathcal{D})\leq \sdepth(\mathcal{D}')\leq\sdepth(y_{t+1},\ldots,y_n)$ because $\mid Z_j\mid=\mid V_j\mid$.
Consequently $\sdepth(\mathcal{D})\leq \lceil\frac{n-t}{2}\rceil$ by  \cite{Bi} and so
 $\sdepth(\mathcal{D})\leq \min\{r,\lceil\frac{n-t}{2}\rceil\}$.
 
 Note also that if $t=n$, or $r=0$ then the same proof works; so $\sdepth S/(Q\cap Q')=0$, which is clear because $\depth S/(Q\cap Q')=0$ (see 
 \cite[Corollary 1.6]{C1}).
\end{proof}
\begin{Proposition}\label{up}
Let $Q$,$Q'$ be two non-zero monomial primary ideals of $S$ with different associated prime ideals. Suppose that $\dim(S/(Q+ Q'))=0$. Then \\
$\sdepth _S (S/(Q\cap Q'))\leq$
 $$\max\{\min\{\dim(S/Q'),\lceil\frac{\dim(S/Q)}{2}\rceil\},\min\{\dim(S/Q),\lceil\frac{\dim(S/Q')}{2}\rceil\}\}.$$
\end{Proposition}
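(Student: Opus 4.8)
The plan is to reduce Proposition \ref{up} to the special situation already analyzed, and then invoke Lemma \ref{l4}. First I would set up the notation as in the discussion preceding Lemma \ref{l4}: after renumbering the variables, write $\sqrt Q=(x_1,\ldots,x_t)$ and $\sqrt{Q'}=(x_{r+1},\ldots,x_n)$ with $1\le r\le t<n$, which is forced by $\dim(S/(Q+Q'))=0$ together with the hypothesis that $Q,Q'$ are non-zero with distinct associated primes (the degenerate cases $r=0$ or $t=n$ cannot occur here, since they would force $\dim(S/(Q+Q'))\ge 1$ or one of the ideals to be primary to the maximal ideal in a way incompatible with the setup — these are exactly the situations the last paragraph of Lemma \ref{l4} dispatches, giving $\sdepth S/(Q\cap Q')=0$, which is trivially below the claimed bound). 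With this numbering, $\dim(S/Q)=n-t$ and $\dim(S/Q')=n-p+r$ with $p=n$, i.e. $\dim(S/Q')=r$, so the bound to be proved reads $\sdepth_S(S/(Q\cap Q'))\le \max\{\min\{r,\lceil(n-t)/2\rceil\},\min\{n-t,\lceil r/2\rceil\}\}$.

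Next I would take a Stanley decomposition $\mathcal D\colon S/(Q\cap Q')=\bigoplus_{i=1}^s u_iK[Z_i]$ with $\sdepth\mathcal D=\sdepth(S/(Q\cap Q'))$, and observe, exactly as in the paragraph before Lemma \ref{l4}, that since $\depth S/(Q\cap Q')\ge 1$ (the associated primes of $Q\cap Q'$ have dimension $\ge 1$) and hence $\sdepth S/(Q\cap Q')\ge 1$ by \cite[Corollary 1.6]{C1}, we have $|Z_i|\ge 1$ for all $i$; moreover no $Z_i$ can mix variables from $\{x_1,\ldots,x_r\}$ with variables from $\{x_{t+1},\ldots,x_n\}$, and the component containing $1$ has $u_1=1$, so $Z_1\subseteq\{x_1,\ldots,x_r\}$ or $Z_1\subseteq\{x_{t+1},\ldots,x_n\}$.

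The heart of the argument is the dichotomy. In the first case, $Z_1\subseteq\{x_1,\ldots,x_r\}$, Lemma \ref{l4} applies directly and yields $\sdepth\mathcal D\le\min\{r,\lceil(n-t)/2\rceil\}=\min\{\dim(S/Q'),\lceil\dim(S/Q)/2\rceil\}$. In the second case, $Z_1\subseteq\{x_{t+1},\ldots,x_n\}$, I would appeal to the symmetric version of Lemma \ref{l4}: interchanging the roles of $Q$ and $Q'$ (and correspondingly the variable blocks $\{x_1,\ldots,x_r\}$ and $\{x_{t+1},\ldots,x_n\}$), the same proof — now building an injection of a polynomial ring in the variables $y_1,\ldots,y_r$ into $S/(Q\cap Q')$ via $y_i\mapsto x_i^a$ with $x_i^a\in Q$ for $i\le r$, pulling back $Q\cap K[x_1,\ldots,x_r]\subset S/(Q\cap Q')$, and using \cite{Bi} for the sdepth of the graded maximal ideal $(y_1,\ldots,y_r)$ — gives $\sdepth\mathcal D\le\min\{n-t,\lceil r/2\rceil\}=\min\{\dim(S/Q),\lceil\dim(S/Q')/2\rceil\}$. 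In either case $\sdepth\mathcal D$ is bounded above by one of the two terms in the asserted maximum, hence by the maximum itself, and since $\sdepth\mathcal D=\sdepth_S(S/(Q\cap Q'))$ the proposition follows.

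The main obstacle is bookkeeping rather than a genuinely new idea: one must make sure that the renumbering that puts $Q$ in the first $t$ variables and $Q'$ in the last $n-r$ variables is consistent with the hypothesis $\dim(S/(Q+Q'))=0$, which here amounts to $p=n$ in the notation of Theorem \ref{Low}; and in the second case one must check that Lemma \ref{l4}, whose statement is phrased for the block $\{x_1,\ldots,x_r\}$, really is symmetric under the swap $Q\leftrightarrow Q'$ — this is immediate from its proof, since that proof only uses that $Q$ is primary to $(x_1,\ldots,x_t)$ (to get the substitution $y_i\mapsto x_i^a$ landing in the relevant ideal) and that $Q\cap K[x_{t+1},\ldots,x_n]=0$, both of which have exact analogues after the swap.
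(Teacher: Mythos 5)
Your proposal is correct and follows the paper's proof essentially verbatim: reduce to the numbering $\sqrt Q=(x_1,\ldots,x_t)$, $\sqrt{Q'}=(x_{r+1},\ldots,x_n)$, pick a Stanley decomposition realizing $\sdepth(S/(Q\cap Q'))$, locate the Stanley space of $1$, and split into the two cases $Z_1\subseteq\{x_1,\ldots,x_r\}$ (Lemma \ref{l4}) and $Z_1\subseteq\{x_{t+1},\ldots,x_n\}$ (the symmetric version of Lemma \ref{l4}). The only blemish is a self-contradictory parenthetical about the degenerate cases $r=0$ or $t=n$: you first assert they ``cannot occur,'' which is false (one of $Q,Q'$ may well be $\mathfrak m$-primary under the stated hypotheses), and then in the same sentence correctly dispatch them via $\depth S/(Q\cap Q')=0$ --- so the argument is sound, but the claim of impossibility should simply be deleted, matching the paper's opening reduction.
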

\begin{proof}
If one of $Q$,$Q'$ is of dimension zero then $\depth(S/(Q\cap Q'))=0$ and so by  \cite[Corollary 1.6]{C1} (or \cite[Theorem 1.4]{C3}) $\sdepth(S/(Q\cap Q'))=0$, that is the inequality holds trivially. Thus we may suppose  after renumbering of variables that $Q$ is generated in variables $\{x_1,\ldots,x_t\}$ and $Q'$ is generated in variables $\{x_{r+1},\ldots,x_p\}$ for some integers $t,r,p$ with $1\leq r\leq t< p\leq n$, or $0\leq r<t\leq n$. By hypothesis we have $p=n$.
Let $\mathcal{D}$ be the Stanley decomposition of $S/(Q\cap Q')$ such that $\sdepth(\mathcal{D})=\sdepth(S/(Q\cap Q'))$. Let $Z_1$ be defined as in Lemma \ref{l4}, that is $K[Z_1]$ is the Stanley space corresponding to $1$. If $Z_1\subset \{x_1,\ldots,x_r\}$ then by Lemma \ref{l4} $$\sdepth(\mathcal{D})\leq \min\{r,\lceil\frac{n-t}{2}\rceil\}=\min\{\dim(S/Q'),\lceil\frac{\dim(S/Q)}{2})\rceil\}.$$
If $Z_1\subset \{x_{t+1},\ldots,x_n\}$ we get analogously $$\sdepth(\mathcal{D})\leq \min\{n-t,\lceil\frac{r}{2}\rceil\}=\min\{\dim(S/Q),\lceil\frac{\dim(S/Q')}{2})\rceil\},$$
which shows our inequality.
\end{proof}
\begin{Theorem}\label{Up}
Let $Q$ and $Q'$ be two non-zero monomial primary ideals of $S$ with different associated  prime ideals. Then  $\sdepth_S S/(Q\cap Q')\leq$
\begin{multline*}
 \max\{\min\{\dim(S/Q'),\lceil\frac{\dim(S/Q)+\dim(S/(Q+Q'))}{2}\rceil\},\\
 \min\{\dim(S/Q),\lceil\frac{\dim(S/Q'+\dim(S/(Q+Q')))}{2}\rceil\}\}.
\end{multline*}
\end{Theorem}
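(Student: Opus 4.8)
The plan is to reduce the statement to Proposition \ref{up}, following the pattern of the proof of Theorem \ref{Low}. Since $Q$ and $Q'$ are monomial primary ideals with different associated primes, after renumbering the variables we may assume that $Q$ is generated in the variables $\{x_1,\ldots,x_t\}$ and $Q'$ in the variables $\{x_{r+1},\ldots,x_p\}$, for integers $0\le r\le t\le p\le n$ (choose the labelling so that $\{x_{r+1},\ldots,x_t\}$ is the common variable block of the two radicals). Then $\dim(S/Q)=n-t$, $\dim(S/Q')=n-p+r$, and, since $\sqrt{Q+Q'}=(x_1,\ldots,x_p)$, $\dim(S/(Q+Q'))=n-p$. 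If $p=n$ then $\dim(S/(Q+Q'))=0$ and the assertion is exactly Proposition \ref{up}; note that this already covers the case in which $Q$ or $Q'$ has dimension $0$. So from now on assume $p<n$.

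Set $S'=K[x_1,\ldots,x_p]$, $q=Q\cap S'$ and $q'=Q'\cap S'$. Because $Q$ is generated by monomials in $x_1,\ldots,x_t$ and $Q'$ by monomials in $x_{r+1},\ldots,x_p$, the ideals $q,q'$ are non-zero monomial primary ideals of $S'$ with $\sqrt q=(x_1,\ldots,x_t)$, $\sqrt{q'}=(x_{r+1},\ldots,x_p)$; they have different associated primes and $q+q'$ is the maximal ideal of $S'$, so $\dim(S'/(q+q'))=0$. Moreover $Q\cap Q'$ is generated by least common multiples of generators of $Q$ and of $Q'$, hence by monomials lying in $S'$, so $Q\cap Q'=(q\cap q')S$. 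Applying Proposition \ref{up} inside $S'$, together with $\dim(S'/q)=p-t$ and $\dim(S'/q')=r$, yields
$$\sdepth_{S'} S'/(q\cap q')\leq \max\{\min\{r,\lceil\frac{p-t}{2}\rceil\},\min\{p-t,\lceil\frac{r}{2}\rceil\}\}.$$

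Finally I would apply \cite[Lemma 3.6]{HVZ}, exactly as in the proof of Theorem \ref{Low}, to obtain
$$\sdepth_S S/(Q\cap Q')=\sdepth_S S/((q\cap q')S)=(n-p)+\sdepth_{S'} S'/(q\cap q'),$$
and then combine the two displays. It then remains to identify the bound
$$\max\{\min\{n-p+r,\,n-p+\lceil\frac{p-t}{2}\rceil\},\ \min\{n-t,\,n-p+\lceil\frac{r}{2}\rceil\}\}$$
with the asserted right-hand side, using the elementary identity $\lceil a/2\rceil+\lfloor a/2\rfloor=a$ for $a\in\ZZ$: it gives $n-p+\lceil\frac{p-t}{2}\rceil=\lceil\frac{(n-t)+(n-p)}{2}\rceil=\lceil\frac{\dim(S/Q)+\dim(S/(Q+Q'))}{2}\rceil$ and $n-p+\lceil\frac{r}{2}\rceil=\lceil\frac{(n-p+r)+(n-p)}{2}\rceil=\lceil\frac{\dim(S/Q')+\dim(S/(Q+Q'))}{2}\rceil$. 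The genuine input is Proposition \ref{up} and the behaviour of $\sdepth$ under adjoining free variables; the rest is bookkeeping, and the only point that needs care is checking that $q$ and $q'$ really inherit the hypotheses of Proposition \ref{up} in $S'$ and that the variable labelling is consistent (so that $p\ge t$ and the common block is $\{x_{r+1},\ldots,x_t\}$), which is where I expect the minor obstacle to lie.
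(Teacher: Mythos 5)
Your proposal is correct and follows essentially the same path as the paper's proof: reduce to the $\dim(S/(Q+Q'))=0$ case handled by Proposition \ref{up} by passing to the subring $S'=K[x_1,\ldots,x_p]$ and then using \cite[Lemma 3.6]{HVZ} to account for the free variables $x_{p+1},\ldots,x_n$, together with the ceiling/floor identity to rewrite the bound. The only difference is that you spell out a few bookkeeping points (that $q,q'$ remain primary with different associated primes and that $Q\cap Q'=(q\cap q')S$) which the paper leaves implicit.
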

\begin{proof}
As in the proof of Proposition \ref{up} we may suppose that $Q$ is generated in variables $\{x_1,\ldots,x_t\}$ and $Q'$ is generated in variables $\{x_{r+1},\ldots,x_p\}$ for some integers $1\leq r\leq t< p\leq n$, or $0\leq r<t\leq n$ but now we have not in general $p=n$. Set $S'=K[x_1,\ldots,x_p]$, $q=Q\cap S'$, $q'=Q'\cap S'$. Using Proposition \ref{up} we get
$$\sdepth _S (S/(q\cap q'))\leq \max\{\min\{\dim(S/q'),\lceil\frac{\dim(S/q)}{2}\rceil\},$$
$$\min\{\dim(S/q),\lceil\frac{\dim(S/q')}{2}\rceil\}\}.$$
By \cite[Lemma 3.6]{HVZ} we have $$\sdepth_S (S/(Q\cap Q'))=\sdepth_S (S/(q\cap q')S)=n-p+\sdepth_{S'} (S'/(q\cap q')).$$
As in the proof of Theorem \ref{Low}, it  follows that
$$\sdepth_S (S/(Q\cap Q'))\leq n-p+\max\{\min\{r,\lceil\frac{p-t}{2}\rceil\},\min\{p-t,\lceil\frac{r}{2}\rceil\}\}=$$
$$\max\{\min\{n-p+r,n-t-\lfloor\frac{p-t}{2}\rfloor\},\min\{n-t,n-p+r-\lfloor\frac{r}{2}\rfloor\}\},$$
which is enough.
\end{proof}
\begin{Corollary}\label{eg}
Let $Q$ and $Q'$ be two non-zero monomial irreducible ideals of $S$ with different associated  prime ideals. Then  $\sdepth_S S/(Q\cap Q')=$
\begin{multline*}
 \max\{\min\{\dim(S/Q'),\lceil\frac{\dim(S/Q)+\dim(S/(Q+Q'))}{2}\rceil\},\\  \min\{\dim(S/Q),\lceil\frac{\dim(S/Q')+\dim(S/(Q+Q'))}{2}\rceil\}\}.
\end{multline*}
\end{Corollary}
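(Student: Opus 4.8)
The plan is to combine the lower bound from Theorem \ref{Low} with the upper bound from Theorem \ref{Up}, both of which apply under the hypotheses of the corollary. Since $Q$ and $Q'$ are non-zero irreducible monomial ideals with different associated prime ideals, they are in particular primary, so Theorem \ref{Up} gives
\[
\sdepth_S S/(Q\cap Q')\leq \max\{\min\{\dim(S/Q'),\lceil\tfrac{\dim(S/Q)+\dim(S/(Q+Q'))}{2}\rceil\},\ \min\{\dim(S/Q),\lceil\tfrac{\dim(S/Q')+\dim(S/(Q+Q'))}{2}\rceil\}\}.
\]
On the other hand, irreducibility is exactly the hypothesis of Theorem \ref{Low}, which gives the reverse inequality with the identical right-hand side. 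Putting the two together yields the claimed equality.

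The only thing to check carefully is that the right-hand sides of the two theorems literally coincide as written, so that the two inequalities are sharp against each other; this is immediate by inspection since both are the same max-min expression in $\dim(S/Q)$, $\dim(S/Q')$ and $\dim(S/(Q+Q'))$. One should also note that no separate hypothesis $\dim S/(Q+Q')=0$ is needed here — unlike in Proposition \ref{up} — because Theorems \ref{Low} and \ref{Up} already incorporate the term $\dim(S/(Q+Q'))$ into the bound, having reduced to the zero-dimensional-intersection case internally via \cite[Lemma 3.6]{HVZ}.

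I expect essentially no obstacle: this corollary is a formal consequence of the two main theorems of the two preceding sections, and its proof is a single sentence invoking Theorem \ref{Low} and Theorem \ref{Up}. The substantive work — the reduction to $p=n$, the computation of $\sdepth$ of irreducible ideals via \cite{C2} and \cite[Theorem 2.4]{Sh}, and the interval case via \cite{Bi} — has already been carried out in establishing those theorems, so here one simply records that the lower and upper bounds agree.

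\begin{proof}
Since $Q$ and $Q'$ are non-zero irreducible monomial ideals with different associated prime ideals, they are primary, so Theorem \ref{Up} applies and gives ``$\leq$'' in the asserted equality. On the other hand, irreducibility of $Q$ and $Q'$ is precisely the hypothesis of Theorem \ref{Low}, which gives ``$\geq$'' with the same right-hand side. Hence equality holds.
\end{proof}
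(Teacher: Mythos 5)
Your proof is correct and is exactly the paper's argument: the paper's proof of Corollary \ref{eg} is the single line ``apply Theorem \ref{Low} and Theorem \ref{Up},'' and you have simply spelled out that irreducibility of $Q,Q'$ (with the standing non-zero and distinct-radical hypotheses) lets you invoke Theorem \ref{Up} for the upper bound (since irreducible monomial ideals are primary) and Theorem \ref{Low} for the matching lower bound. No gap, and no difference in approach.
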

For the proof apply Theorem \ref{Low} and Theorem \ref{Up}.
\begin{Corollary}\label{pr}
Let $P$ and $P'$ be two different non-zero monomial prime ideals of $S$, which are not included one in the other. Then  $\sdepth_S S/(P\cap P')=$
\begin{multline*}
 \max\{\min\{\dim(S/P'),\lceil\frac{\dim(S/P)+\dim(S/(P+P'))}{2}\rceil\},\\  \min\{\dim(S/P),\lceil\frac{\dim(S/P')+\dim(S/(P+P'))}{2}\rceil\}\}.
\end{multline*}
\end{Corollary}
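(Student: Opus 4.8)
The plan is to reduce the statement directly to Corollary \ref{eg}. A non-zero monomial prime ideal of $S$ is generated by a subset of the variables, so write $P=(x_i:i\in A)$ and $P'=(x_i:i\in B)$ with $\emptyset\neq A,B\subseteq\{1,\dots,n\}$. In particular $P$ and $P'$ are non-zero irreducible monomial ideals, being the special case $(x_{i_1}^{a_1},\dots,x_{i_k}^{a_k})$ with all $a_j=1$. Moreover, being prime they coincide with their radicals, so the associated prime ideal of $P$ is $P$ and that of $P'$ is $P'$; these are different by hypothesis. Thus the hypotheses of Corollary \ref{eg} are satisfied, and for the pair $P,P'$ its right-hand side is, after the substitution $Q=P$, $Q'=P'$, precisely the expression in the statement, so nothing further is needed.

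If one prefers to run the estimates by hand rather than quote Corollary \ref{eg}, the key preliminary step is to put $P,P'$ in standard position. Renumber the variables so that $A\setminus B=\{1,\dots,r\}$, $A\cap B=\{r+1,\dots,t\}$ and $B\setminus A=\{t+1,\dots,p\}$, the remaining variables being $\{x_{p+1},\dots,x_n\}$. The hypothesis that neither of $P,P'$ is contained in the other forces $A\setminus B\neq\emptyset$ and $B\setminus A\neq\emptyset$, i.e. $1\leq r\leq t<p\leq n$, which is exactly the non-degenerate range in which Theorem \ref{Low} and Theorem \ref{Up} are proved. One then has $P$ generated in $\{x_1,\dots,x_t\}$, $P'$ generated in $\{x_{r+1},\dots,x_p\}$, and $\dim(S/P)=n-t$, $\dim(S/P')=n-p+r$, $\dim(S/(P+P'))=n-p$; substituting these values into Theorem \ref{Low} and Theorem \ref{Up} yields the lower and upper bounds whose combination is the claimed equality.

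I do not expect a genuine obstacle here: the content is entirely in observing that a monomial prime ideal is a (very special) irreducible monomial ideal equal to its own radical, and in checking that "not included one in the other" is the exact translation of $r\geq 1$ and $p>t$, so that one stays in the regime covered by the earlier theorems. Everything else is the routine bookkeeping of the four dimensions above.
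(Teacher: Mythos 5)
Your proof is correct and follows exactly the paper's approach, which is simply to note that monomial prime ideals are irreducible monomial ideals equal to their own radicals, so Corollary \ref{eg} applies directly. Your additional observation that the hypothesis ``not included one in the other'' translates precisely to $1\leq r\leq t<p\leq n$ in standard position is a useful clarification, since it pins down the regime in which the upper bound of Theorem \ref{Up} is actually established.
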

\begin{proof}
For the proof apply  Corollary \ref{eg}.
\end{proof}
\begin{Corollary}
Let $\triangle$ be a simplicial complex  in $n$ vertices with only two different facets $F$, $F'$. Then
$\sdepth K[\triangle]=$
\begin{multline*}
 \max\{\min\{|F'|,\lceil\frac{|F|+|F\cap F'|}{2}\rceil\},  \min\{|F|,\lceil\frac{|F'|+|F\cap F'|}{2}\rceil\}\}.
\end{multline*}
\end{Corollary}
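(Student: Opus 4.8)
The plan is to reduce this statement about the Stanley--Reisner ring $K[\triangle]$ to Corollary \ref{pr}. Recall that if $\triangle$ is a simplicial complex on vertex set $\{x_1,\ldots,x_n\}$ with facets $F_1,\ldots,F_m$, then the Stanley--Reisner ideal $I_\triangle$ is the intersection of the prime ideals $P_{F_j}=(x_i: x_i\notin F_j)$, so that $K[\triangle]=S/I_\triangle=S/(P_{F_1}\cap\dots\cap P_{F_m})$. When $\triangle$ has exactly two facets $F,F'$, this gives $K[\triangle]=S/(P_F\cap P_{F'})$, where $P_F=(x_i:x_i\notin F)$ and $P_{F'}=(x_i:x_i\notin F')$ are monomial prime ideals generated by variables.

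The key computations are then purely combinatorial: $\dim(S/P_F)=|F|$, $\dim(S/P_{F'})=|F'|$, and since $P_F+P_{F'}=(x_i:x_i\notin F\cap F')$, we get $\dim(S/(P_F+P_{F'}))=|F\cap F'|$. So I first observe that $P_F$ and $P_{F'}$ are distinct (because $F\neq F'$), non-zero (because neither facet is all of $\{x_1,\ldots,x_n\}$ — here I should note that if one of the facets were the full vertex set then $\triangle$ would have only one facet, contradicting our assumption, or handle this as a degenerate case), and neither is contained in the other (because $F\not\subseteq F'$ and $F'\not\subseteq F$, as $F,F'$ are distinct facets, hence incomparable inclusion-wise). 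Thus Corollary \ref{pr} applies verbatim with $P=P_F$, $P'=P_{F'}$, and substituting the dimension formulas above yields exactly the claimed equality.

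So a proof sketch would read: Let $P=(x_i:x_i\notin F)$ and $P'=(x_i:x_i\notin F')$. Since $\triangle$ has only the facets $F$ and $F'$, its Stanley--Reisner ideal satisfies $I_\triangle=P\cap P'$, so $K[\triangle]=S/(P\cap P')$. As $F\neq F'$ are facets, they are incomparable, hence $P\neq P'$, neither contained in the other, and both non-zero (neither facet exhausts all $n$ vertices, otherwise $\triangle$ would have a single facet). By Corollary \ref{pr},
\begin{multline*}
\sdepth K[\triangle]=\max\{\min\{\dim(S/P'),\lceil\tfrac{\dim(S/P)+\dim(S/(P+P'))}{2}\rceil\},\\
\min\{\dim(S/P),\lceil\tfrac{\dim(S/P')+\dim(S/(P+P'))}{2}\rceil\}\}.
\end{multline*}
Now $\dim(S/P)=|F|$, $\dim(S/P')=|F'|$, and $P+P'=(x_i:x_i\notin F\cap F')$ gives $\dim(S/(P+P'))=|F\cap F'|$; substituting yields the desired formula.

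There is essentially no obstacle here — the only point requiring a little care is the boundary/degeneracy discussion: one must confirm that the hypotheses of Corollary \ref{pr} are genuinely met, i.e., that $P$ and $P'$ are non-zero and incomparable, which follows from $F$ and $F'$ being two distinct facets (so neither equals the full vertex set, and neither contains the other). One might also remark that $\lceil a/2\rceil$ denotes the smallest integer $\geq a/2$, consistent with the notation fixed in the introduction. Everything else is a direct translation of the ring-theoretic identity into simplicial-complex language.
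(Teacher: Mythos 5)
Your proposal is correct and is exactly the argument the paper intends: the corollary is stated immediately after Corollary \ref{pr} with no separate proof, and the expected derivation is precisely the translation you give, namely $K[\triangle]=S/(P_F\cap P_{F'})$ with $P_F=(x_i:x_i\notin F)$, $\dim(S/P_F)=|F|$, $\dim(S/P_{F'})=|F'|$, $\dim(S/(P_F+P_{F'}))=|F\cap F'|$, and the hypotheses of Corollary \ref{pr} verified from the fact that two distinct facets are incomparable (so neither $P_F$ nor $P_{F'}$ is zero or contained in the other). Your extra care about the degenerate case where a facet equals the full vertex set is a sound addition and does not change the conclusion.
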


\vskip 1 cm
\section{An Illustration}

Let $S=K[x_1,\ldots,x_6]$, $Q=(x_1^2,x_2^2,x_3^2,x_4^2,x_1x_2x_4,x_1x_3x_4)$, $Q'=(x_4^2,x_5,x_6)$.  By our Theorem \ref{Up} we get $$\sdepth
S/(Q\cap Q')\leq \max\{\min\{3,\lceil\frac {2}{2}\rceil\}, \min\{2,\lceil\frac {3}{2}\rceil\}=\max\{1,2\}=2.$$
On the other hand,  we claim that $I=((Q:w)\cap K[x_1,x_2,x_3])=(x_1^2,x_2^2,x_3^2,x_1x_2,x_1x_3)$  for $w=x_4$ and
 $\sdepth I=1<2=\sdepth(Q\cap K[x_1,x_2,x_3])$. Thus our Proposition \ref{low} gives $$\sdepth
S/(Q\cap Q')\geq \max\{\min\{3,\lceil\frac {2}{2}\rceil\}, \min\{2,\lceil\frac {3}{2}\rceil,1\}=1.$$
In this section, we will show that  $\sdepth(S/(Q\cap Q'))=1$.

First we prove our claim. Suppose that there exists a Stanley decomposition $\mathcal{D}$ of $I$ with  $\sdepth \mathcal{D}\geq 2$. Among the Stanley  spaces of
 $\mathcal{D}$ we have five important $x_1^2K[Z_1]$, $x_2^2K[Z_2]$, $x_3^2K[Z_3]$, $x_1x_2K[Z_4]$, $x_1x_3K[Z_5]$ for some subsets $Z_i\subset \{x_1,x_2,x_3\}$ with
 $|Z_i|\geq 2$. If  $Z_4=\{x_1,x_2,x_3\}$  and $Z_5$ contains $x_2$ then the last two Stanley spaces will have a non-zero intersection and if $Z_1$ contains $x_2$ then the first and the fourth Stanley space will have non-zero intersection. Now if $x_2\not\in Z_5$ and $x_2\not\in Z_1$ then the first and the last space  will intersect. Suppose that $Z_4=\{x_1,x_2\}$. Then $x_2\not
 \in Z_1$ (resp. $x_1\not \in Z_2$) because otherwise the intersection of  $x_1x_2K[Z_4]$ with the first Stanley space (resp. the second one) will be again non-zero. As $|Z_1|,|Z_2|\geq 2$ we
 get $Z_1=\{x_1,x_3\}$, $Z_2=\{x_2,x_3\}$. But $x_1\not \in Z_3$ because otherwise the first and the third Stanley space will contain $x_1^2x_3^2$, which is impossible. Similarly, $x_2\not \in Z_3$, which contradicts  $|Z_3|\geq 2$. The case  $Z_5=\{x_1,x_3\}$ gives a similar contradiction.

 Now suppose that $Z_4=\{x_1,x_3\}$. If $Z_5\supset \{x_1,x_2\}$ we see that the intersection of the last two Stanley spaces  from the above five,  contains $x_1^2x_2x_3$
 and if $Z_5=\{x_2,x_3\}$ we see that the intersection of the same Stanley spaces  contains $x_1x_2x_3$. Contradiction (we saw that $Z_5\not =\{x_1,x_3\}$)! Hence $\sdepth \mathcal{D}\leq 1$ and so
 $\sdepth I=1$ using \cite{C1}.

Next we show that $\sdepth S/(Q\cap Q')=1$. Suppose that $\mathcal{D}'$ is  a Stanley decomposition of $S/(Q\cap Q')$ such that $\sdepth
 S/(Q\cap Q')=2$. We claim that $\mathcal{D}'$ has the form
$$ S/(Q\cap Q')=(\oplus vK[x_5,x_6])  \oplus    ( \oplus _{i=1} ^s u_i K[Z_i] )   $$
for some monomials $v\in (K[x_1,\ldots,x_4]\setminus Q)$, $u_i\in (Q\cap K[x_1,\ldots,x_4])$ and $Z_i\subset \{x_1,x_2,x_3\}$.
Indeed, let $v\in (K[x_1,\ldots,x_4]\setminus Q)$. Then $vx_5$, $vx_6$ belong to some Stanley spaces of $\mathcal{D}'$, let us say $uK[Z]$, $u'K[Z']$. The presence of $x_5$ in $u$ or $Z$ implies that $Z$ does not contain any $x_i$, $1\leq i\leq 3$, otherwise $uK[Z]$ will be not free over $K[Z]$. Thus
$Z\subset \{x_5,x_6\}$. As $|Z|\geq 2$ we get $Z= \{x_5,x_6\}$ and similarly  $Z'= \{x_5,x_6\}$. Thus $vx_5x_6\in  (uK[Z]\cap  u'K[Z'])$ and it follows that
$u=u'$, $Z=Z'$ because the  sum in $\mathcal{D}'$ is direct. It follows that $u|vx_5$, $u|vx_6$ and so $u|v$, that is $v=uf$, $f$ being a monomial in $x_5,x_6$.
As  $v\in K[x_1,\ldots,x_4]$ we get $f=1$ and so $u=v.$

A monomial $w\in (Q\setminus Q')$ is not a multiple of $x_5,x_6$, because otherwise $w\in Q'$. Suppose $w$ belongs to a Stanley space $uK[Z]$ of
 $\mathcal{D}'$. If $u\in (K[x_1,\ldots,x_4]\setminus Q)$ then  as above $\mathcal{D}'$  has also a Stanley space $uK[x_5,x_6]$ and both spaces contains $u$. This is false since the sum is direct. Thus $u\in (Q\cap K[x_1,\ldots,x_4])$, which shows our claim.

Hence $\mathcal{D}'$ induces two Stanley decompositions
$S/Q=\oplus_{v\in (K[x_1,\ldots,x_4]\setminus Q)} vK[x_5,x_6]$, $Q/(Q\cap Q')= \oplus _{i=1} ^s u_i K[Z_i] $, where $u_i\in (Q\cap K[x_1,\ldots,x_4])$
and $Z_i\subset \{x_1,x_2,x_3\}$. Then we get the following Stanley decompositions $$Q\cap K[x_1,\ldots,x_3]= \oplus _{i=1, u_i\not \in (x_4)} ^s u_i K[Z_i],\ \
I=\oplus _{i=1, x_4 | u_i} ^s (u_i/x_4) K[Z_i].$$ As $2\leq \min_i|Z_i|$ we get $ \sdepth I\geq 2$. Contradiction!

\vskip 1 cm
\section{A lower bound for Stanley's depth of some ideals}

Let $Q$, $Q'$ be two non-zero irreducible monomial ideals of $S$ such that $\sqrt{Q}=(x_1,\ldots,x_t)$, $\sqrt{Q'}=(x_{r+1},\ldots,x_p)$
for some integers $r,t,p$ with $1\leq r\leq t<p\leq n$, or  $0= r< t< p\leq n$, or $1\leq r\leq t=p\leq n$.
\begin{Lemma}\label{ea}
Suppose that  $p=n$, $t=r$. Then $$\sdepth (Q\cap Q')\geq \lceil\frac{r}{2}\rceil+\lceil\frac{n-r}{2}\rceil\geq n/2.$$
\end{Lemma}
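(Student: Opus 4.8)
The plan is to write down an explicit Stanley decomposition of $Q\cap Q'$ with the required minimal block size, by combining good decompositions of the two ``halves'' $Q$ (in $r$ variables) and $Q'$ (in $n-r$ variables), using the fact that under the hypotheses $p=n$, $t=r$ the two ideals live in complementary sets of variables: $\sqrt{Q}=(x_1,\dots,x_r)$ and $\sqrt{Q'}=(x_{r+1},\dots,x_n)$. First I would observe that since $Q$ is irreducible and generated in the variables $x_1,\dots,x_r$, we have $Q=qK[x_1,\dots,x_n]$ where $q=Q\cap K[x_1,\dots,x_r]$ is irreducible in $R_1:=K[x_1,\dots,x_r]$; similarly $Q'=q'S$ with $q'=Q'\cap K[x_{r+1},\dots,x_n]$ irreducible in $R_2:=K[x_{r+1},\dots,x_n]$. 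Then $Q\cap Q'=QQ'=qq'S$, and as a $K$-vector space $Q\cap Q'=q\tensor_K q'$ in the obvious sense: a monomial lies in $Q\cap Q'$ iff its $R_1$-part lies in $q$ and its $R_2$-part lies in $q'$.

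Next I would invoke the results quoted in the proof of Theorem \ref{Low} (from \cite{C2} and \cite[Theorem 2.4]{Sh}): an irreducible monomial ideal generated in $k$ of the $m$ variables of its ambient polynomial ring has Stanley depth $m-\lfloor k/2\rfloor$. Applying this with $m=k=r$ gives $\sdepth_{R_1} q=r-\lfloor r/2\rfloor=\lceil r/2\rceil$, and with $m=k=n-r$ gives $\sdepth_{R_2} q'=(n-r)-\lfloor (n-r)/2\rfloor=\lceil (n-r)/2\rceil$. Fix Stanley decompositions $q=\bigoplus_i a_iK[V_i]$ in $R_1$ with all $|V_i|\geq\lceil r/2\rceil$ and $q'=\bigoplus_j b_jK[W_j]$ in $R_2$ with all $|W_j|\geq\lceil (n-r)/2\rceil$, where $V_i\subseteq\{x_1,\dots,x_r\}$ and $W_j\subseteq\{x_{r+1},\dots,x_n\}$.

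Then I would form the ``product'' decomposition
\[
Q\cap Q'=\Dirsum_{i,j} a_ib_j\, K[V_i\cup W_j].
\]
The key points to check are: (a) each $a_ib_jK[V_i\cup W_j]$ is a genuine Stanley space, i.e. $a_ib_jf$ ($f\in K[V_i\cup W_j]$) runs over a free $K[V_i\cup W_j]$-module — this is immediate because $V_i$ and $W_j$ are disjoint and $a_i$, $b_j$ involve disjoint sets of variables, so $K[V_i\cup W_j]=K[V_i]\tensor_K K[W_j]$ and freeness is inherited factor by factor; (b) the sum is direct and exhausts $Q\cap Q'$ — this follows by splitting any monomial $u\in Q\cap Q'$ as $u=u_1u_2$ with $u_1\in q\subseteq R_1$, $u_2\in q'\subseteq R_2$, writing $u_1\in a_iK[V_i]$ and $u_2\in b_jK[W_j]$ uniquely, and reassembling; directness likewise reduces to directness in each factor. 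Since $V_i\cap W_j=\emptyset$, $|V_i\cup W_j|=|V_i|+|W_j|\geq\lceil r/2\rceil+\lceil (n-r)/2\rceil$ for every pair $(i,j)$, so this decomposition witnesses $\sdepth(Q\cap Q')\geq\lceil r/2\rceil+\lceil (n-r)/2\rceil$. Finally, $\lceil r/2\rceil+\lceil (n-r)/2\rceil\geq r/2+(n-r)/2=n/2$ gives the second inequality.

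I expect the only genuinely delicate step to be making the bookkeeping of step (b) airtight — in particular checking that a monomial of $Q\cap Q'=qq'S$ always factors uniquely through the two complementary variable sets in a way compatible with the chosen decompositions of $q$ and $q'$ — but this is exactly the kind of tensor-product argument already used implicitly via \cite[Lemma 3.6]{HVZ} in the proof of Theorem \ref{Low}, so it should go through cleanly; alternatively one could phrase the whole thing as $\sdepth_S(qq'S)=\sdepth_{R_1\tensor R_2}(q\tensor R_2\cdot R_1\tensor q')$ and cite the multiplicativity of Stanley depth under such tensor decompositions.
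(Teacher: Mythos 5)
Your proposal is correct and takes essentially the same route as the paper: the paper's proof also establishes a bijection $(f,g)\mapsto fg$ between $\mathcal{M}(Q\cap K[x_1,\ldots,x_r])\times\mathcal{M}(Q'\cap K[x_{r+1},\ldots,x_n])$ and $\mathcal{M}(Q\cap Q')$ — which is precisely your tensor-product observation $Q\cap Q'=qq'S$ — and then forms the product Stanley decomposition $\bigoplus_{i,j} u_iv_jK[Z_i\cup T_j]$ from optimal decompositions of the two restricted ideals, invoking the same formula $\sdepth L=m-\lfloor k/2\rfloor$ for irreducible $L$ from \cite{C2} and \cite{Sh}. The only cosmetic differences are that the paper phrases the factorization via an explicit map $\varphi$ and checks its surjectivity using regularity of the variables $x_i$ ($i>r$) on $S/Q$, whereas you phrase it as a tensor decomposition; the content is identical.
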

\begin{proof} It follows $1\leq r<p$. Let $f\in Q\cap K[x_1,\ldots,x_r]$, $g\in  Q'\cap K[x_{r+1},\ldots,x_n]$ and ${\mathcal M}(T)$ be the monomials from an ideal $T$.
The correspondence $(f,g)\to fg$ defines a map $\varphi: {\mathcal M}(Q\cap K[x_1,\ldots,x_r])\times {\mathcal M}(Q'\cap K[x_{r+1},\ldots,x_n])\to {\mathcal M}(Q\cap Q')$, which is
injective. If $w$ is a monomial of $Q\cap Q'$, let us say $w=fg$ for some monomials $f\in K[x_1,\ldots,x_r]$, $g\in K[x_{r+1},\ldots,x_n]$
then $fg\in Q$ and so $f\in Q$ because the variables $x_i$, $i>r$ are regular on $S/Q$. Similarly, $g\in Q'$ and so $w=\varphi((f,g))$, that is $\varphi$
is surjective. Let $\mathcal{D}$ be a Stanley decomposition of $Q\cap K[x_1,\ldots,x_r]$,
$$\mathcal{D}:\ \ \  Q\cap K[x_1,\ldots,x_r]=\oplus_{i=1}^s u_iK[Z_i]$$ with $\sdepth \mathcal{D}=\sdepth  (Q\cap K[x_1,\ldots,x_r])$ and
$\mathcal{D'}$  a Stanley decomposition of $Q'\cap K[x_{r+1},\ldots,x_n]$,
$$\mathcal{D'}:\ \ \  Q'\cap K[x_{r+1},\ldots,x_n]=\oplus_{j=1}^e v_jK[T_j]$$ with $\sdepth \mathcal{D'}=\sdepth  (Q'\cap K[x_{r+1},\ldots,x_n]).$
They induce a Stanley decomposition
$$\mathcal{D''}:  \ \ \ \ Q\cap Q'=\oplus_{j=1}^e \oplus_{i=1}^s u_iv_jK[Z_i\cup T_j]$$
because of the bijection $\varphi$. Thus
$$\sdepth (Q\cap Q')\geq \sdepth \mathcal{D''}=\min_{i,j} (|Z_i|+|T_j|)\geq \min_{i}|Z_i| +\min_{j}|T_j|=$$
$$\sdepth \mathcal{D}+\sdepth \mathcal{D'}=\sdepth (Q\cap K[x_1,\ldots,x_r])+\sdepth  (Q'\cap K[x_{r+1},\ldots,x_n])=$$
$$(r-\lfloor\frac{r}{2}\rfloor)+(n-r-\lfloor\frac{n-r}{2}\rfloor)=
\lceil\frac{r}{2}\rceil+\lceil\frac{n-r}{2}\rceil\geq n/2.$$
\end{proof}

\begin{Remark}\label{ky} {\em Suppose that $n=8$, $r=1$. Then by the above lemma we get $\sdepth (Q\cap Q')\geq \lceil\frac{1}{2}\rceil+\lceil\frac{7}{2}\rceil=5$. Since
$|G(Q\cap Q')|=7$ we get by \cite{KY}, \cite{O} the same lower bound $\sdepth (Q\cap Q')\geq 8-\lfloor\frac{7}{2}\rfloor =5$.
If  $n=8$, $r=2$ then by \cite{KY}, \cite{O} we have
$\sdepth (Q\cap Q')\geq 8-\lfloor\frac{12}{2}\rfloor =2$ but our previous lemma gives $\sdepth (Q\cap Q')\geq \lceil\frac{2}{2}\rceil+\lceil\frac{6}{2}\rceil=4.$}
\end{Remark}

\begin{Lemma}\label{lb}
Suppose that  $p=n$. Then $$\sdepth (Q\cap Q')\geq \lceil\frac{r}{2}\rceil+\lceil\frac{n-t}{2}\rceil.$$
\end{Lemma}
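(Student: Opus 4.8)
The plan is to construct, in the spirit of Lemma \ref{ea}, an explicit Stanley decomposition of $Q\cap Q'$ each of whose pieces has Stanley depth at least $\lceil r/2\rceil+\lceil(n-t)/2\rceil$; the new point compared with Lemma \ref{ea} is to deal with the variables $x_{r+1},\dots,x_t$ that belong to both $\sqrt Q$ and $\sqrt{Q'}$. Since $Q,Q'$ are non-zero and irreducible and $p=n$, we may write $Q=(x_1^{a_1},\dots,x_t^{a_t})$ and $Q'=(x_{r+1}^{c_{r+1}},\dots,x_n^{c_n})$ with all exponents $\geq 1$. Put $S_1=K[x_1,\dots,x_r]$, $\bar S=K[x_{r+1},\dots,x_t]$, $S_3=K[x_{t+1},\dots,x_n]$, so $S=S_1\tensor_K\bar S\tensor_K S_3$, and set $\hat Q=(x_1^{a_1},\dots,x_r^{a_r})\subset S_1$, $\hat Q'=(x_{t+1}^{c_{t+1}},\dots,x_n^{c_n})\subset S_3$, $\bar Q=(x_{r+1}^{a_{r+1}},\dots,x_t^{a_t})\subset\bar S$, $\bar Q'=(x_{r+1}^{c_{r+1}},\dots,x_t^{c_t})\subset\bar S$.

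First I would sort the monomials of $Q\cap Q'$ by their $\bar S$-component. Writing a monomial uniquely as $w=w_1w_2w_3$ with $w_1\in{\mathcal M}(S_1)$, $w_2\in{\mathcal M}(\bar S)$, $w_3\in{\mathcal M}(S_3)$, and using that $x_{t+1},\dots,x_n$ are regular on $S/Q$ while $x_1,\dots,x_r$ are regular on $S/Q'$, one checks that $w\in Q\cap Q'$ exactly when $(w_1\in\hat Q\text{ or }w_2\in\bar Q)$ and $(w_3\in\hat Q'\text{ or }w_2\in\bar Q')$. Partitioning the monomials of $Q\cap Q'$ according to whether $w_2$ lies in $\bar Q$ and/or in $\bar Q'$ gives a direct sum of $K$-vector spaces
\[
Q\cap Q'=\Bigl(\bigoplus_{w_2}w_2\hat Q\hat Q'\Bigr)\oplus\Bigl(\bigoplus_{w_2}w_2S_1\hat Q'\Bigr)\oplus\Bigl(\bigoplus_{w_2}w_2\hat QS_3\Bigr)\oplus(\bar Q\cap\bar Q')S,
\]
where $w_2$ runs over ${\mathcal M}(\bar S)\setminus(\bar Q\cup\bar Q')$ in the first sum, over ${\mathcal M}(\bar Q\setminus\bar Q')$ in the second, over ${\mathcal M}(\bar Q'\setminus\bar Q)$ in the third, the symbols $w_2\hat Q\hat Q'$, $w_2S_1\hat Q'$, $w_2\hat QS_3$ stand for the $K$-spans of the products $w_2fg$ with $f,g$ monomials of the displayed ideals, and the last summand is exactly the span of those monomials whose middle factor lies in $\bar Q\cap\bar Q'$. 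The first three index sets are finite because $\bar Q$ and $\bar Q'$ have radical $(x_{r+1},\dots,x_t)$, so ${\mathcal M}(\bar S)\setminus\bar Q$ and ${\mathcal M}(\bar S)\setminus\bar Q'$ are finite.

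Next I would bound each summand from below. Fixing Stanley decompositions $\hat Q=\bigoplus_iu_iK[Z_i]$ of $S_1$ and $\hat Q'=\bigoplus_jv_jK[T_j]$ of $S_3$ realizing $\sdepth_{S_1}\hat Q$ and $\sdepth_{S_3}\hat Q'$, the piece $w_2\hat Q\hat Q'$ breaks up as $\bigoplus_{i,j}w_2u_iv_jK[Z_i\cup T_j]$ and so has Stanley depth $\geq\sdepth_{S_1}\hat Q+\sdepth_{S_3}\hat Q'$; likewise $w_2S_1\hat Q'$ has Stanley depth $\geq r+\sdepth_{S_3}\hat Q'$, $w_2\hat QS_3$ has $\geq\sdepth_{S_1}\hat Q+(n-t)$, and extending to $S$ a Stanley decomposition of $\bar Q\cap\bar Q'$ in $\bar S$ shows $\sdepth_S((\bar Q\cap\bar Q')S)\geq n-t+r$, the number of variables adjoined. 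Now $\hat Q$ and $\hat Q'$ are irreducible ideals generated, respectively, by all $r$ variables of $S_1$ and all $n-t$ variables of $S_3$, so by \cite{C2} and \cite[Theorem 2.4]{Sh} (as quoted in the proof of Theorem \ref{Low}) $\sdepth_{S_1}\hat Q=\lceil r/2\rceil$ and $\sdepth_{S_3}\hat Q'=\lceil(n-t)/2\rceil$. Since $r\geq\lceil r/2\rceil$ and $n-t\geq\lceil(n-t)/2\rceil$, every summand has Stanley depth $\geq\lceil r/2\rceil+\lceil(n-t)/2\rceil$, and hence so does $Q\cap Q'$. (If $r=0$ the summands involving $\hat Q$ vanish, if $t=n$ those involving $\hat Q'$ vanish, and if $t=r$ only the first sum survives, recovering Lemma \ref{ea}.)

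The delicate step is the second one: checking that the displayed expression really is a direct sum of honest Stanley spaces and, crucially, that the infinitely many monomials whose middle factor already lies in $\bar Q\cap\bar Q'$ are all absorbed into the single well-behaved ideal $(\bar Q\cap\bar Q')S$. Once that combinatorial bookkeeping is in place the Stanley-depth estimates are routine — the $(\bar Q\cap\bar Q')S$ term in particular only needs the very crude bound $\sdepth\geq n-t+r$, which already comfortably exceeds $\lceil r/2\rceil+\lceil(n-t)/2\rceil$.
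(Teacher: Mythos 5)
Your proof is correct and takes essentially the same approach as the paper: both decompose $Q\cap Q'$ according to the ``middle'' factor in $K[x_{r+1},\dots,x_t]$ and bound each resulting piece by tensoring Stanley decompositions as in Lemma~\ref{ea}, with the $(\bar Q\cap\bar Q')S$ block handled by adjoining the $n-t+r$ free variables. The paper writes the non-$(\bar Q\cap\bar Q')S$ part as a single sum $\bigoplus_w w\bigl(((Q\cap Q'):w)\cap K[x_1,\dots,x_r,x_{t+1},\dots,x_n]\bigr)$ over $w\in\bar S\setminus(\bar Q\cap\bar Q')$ and invokes Lemma~\ref{ea}, whereas you split this into the three sub-cases according to whether $w\in\bar Q$, $w\in\bar Q'$, or neither; your bookkeeping is a little more explicit and cleanly covers the degenerate sub-cases in which one of the two colon-ideal factors becomes the unit ideal, which the paper's appeal to Lemma~\ref{ea} treats somewhat informally.
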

\begin{proof} We show that
$$Q\cap Q'=(Q\cap Q'\cap K[x_{r+1},\ldots,x_t])S\oplus $$
$$(\oplus_w w(((Q\cap Q'):w)\cap K[x_1,\ldots,x_r,x_{t+1},\ldots,x_n])),$$
where $w$ runs in the monomials of $K[x_{r+1},\ldots,x_t]\setminus (Q\cap Q')$. Indeed, a monomial $h$ of $S$ has the form $h=fg$ for some monomials
$f\in K[x_{r+1},\ldots,x_t]$, $g\in  K[x_1,\ldots,x_r,x_{t+1},\ldots,x_n]$. Since $Q, Q'$ are irreducible we see that $h\in Q\cap Q'$
either when $f$ is a multiple of a minimal generator  of $Q\cap Q'\cap K[x_{r+1},\ldots,x_t]$, or $f\not \in (Q\cap Q'\cap K[x_{r+1},\ldots,x_t])$  and then $h\in f(((Q\cap Q'):f)\cap K[x_1,\ldots,x_r,x_{t+1},\ldots,x_n])$.

Let $\mathcal{D}$ be a Stanley decomposition of $(Q\cap Q'\cap K[x_{r+1},\ldots,x_t])S$,
$$\mathcal{D}:\ \ \  (Q\cap Q'\cap K[x_{r+1},\ldots,x_t])S=\oplus_{i=1}^s u_iK[Z_i]$$ with $\sdepth \mathcal{D}=\sdepth  (Q\cap Q'\cap K[x_{r+1},\ldots,x_t])S$ and for all \\
$w\in (K[x_{r+1},\ldots,x_t]\setminus (Q\cap Q')),$
let $\mathcal{D}_w$ be a Stanley decomposition of \\
$((Q\cap Q'):w)\cap K[x_1,\ldots,x_r,x_{t+1},\ldots,x_n]$,
$$\mathcal{D}_w:\ \ \  ((Q\cap Q'):w)\cap K[x_1,\ldots,x_r,x_{t+1},\ldots,x_n]=\oplus_w\oplus_{j} v_{wj}K[T_{wj}]$$ with $\sdepth \mathcal{D}_w=\sdepth  (((Q\cap Q'):w)\cap K[x_1,\ldots,x_r,x_{t+1},\ldots,x_n]).$
Since
\\ $K[x_{r+1},\ldots,x_t]\setminus  (Q\cap Q')$ contains just a finite set of monomials we get  a Stanley decomposition of $Q\cap Q'$,
$$\mathcal{D'}:\ \ \ Q\cap Q'=(\oplus_{i=1}^s u_iK[Z_i])\oplus (\oplus_w\oplus_{j} wv_{wj}K[T_{wj}]),$$
where $w$ runs in the monomials of  $K[x_{r+1},\ldots,x_t]\setminus (Q\cap Q')$. Then
$$\sdepth \mathcal{D'}=\min_w\{\sdepth \mathcal{D},\sdepth \mathcal{D}_w\}=
\min_w\{\sdepth  (Q\cap Q'\cap K[x_{r+1},\ldots,x_t])S,$$ $$\sdepth  (((Q\cap Q'):w)\cap K[x_1,\ldots,x_r,x_{t+1},\ldots,x_n])\}.$$
But $((Q\cap Q'):w)\cap K[x_1,\ldots,x_r,x_{t+1},\ldots,x_n]$ is still an  intersection of two irreducible ideals and  $$\sdepth  (((Q\cap Q'):w)\cap K[x_1,\ldots,x_r,x_{t+1},\ldots,x_n])\geq \lceil\frac{r}{2}\rceil+\lceil\frac{n-t}{2}\rceil$$
by Lemma \ref{ea}. We have $\sdepth  (Q\cap Q'\cap K[x_{r+1},\ldots,x_t])\geq 1$ and so
$$\sdepth  (Q\cap Q'\cap K[x_{r+1},\ldots,x_t])S\geq 1+n-t+r$$
by  \cite[Lemma 3.6]{HVZ}. Thus $$\sdepth (Q\cap Q')\geq \sdepth \mathcal{D'}\geq \lceil\frac{r}{2}\rceil+\lceil\frac{n-t}{2}\rceil.$$
 Note that the proof goes even when $0\leq r< t\leq n$ (anyway $\sdepth  Q\cap Q'\geq 1$ if $n=t$, $r=0$).
\end{proof}

\begin{Lemma}\label{lob}
 $$\sdepth (Q\cap Q')\geq n-p+\lceil\frac{r}{2}\rceil+\lceil\frac{p-t}{2}\rceil.$$
\end{Lemma}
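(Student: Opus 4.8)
The plan is to reduce Lemma \ref{lob} to the already-proved Lemma \ref{lb} by the standard polynomial-extension trick used in the proofs of Theorem \ref{Low} and Theorem \ref{Up}. Recall the hypotheses fixed just before Lemma \ref{ea}: $Q,Q'$ are non-zero irreducible monomial ideals with $\sqrt{Q}=(x_1,\dots,x_t)$ and $\sqrt{Q'}=(x_{r+1},\dots,x_p)$, where $1\le r\le t<p\le n$ (or one of the boundary cases). The point of Lemma \ref{lb} was the case $p=n$; now we want to drop that assumption. So I would set $S'=K[x_1,\dots,x_p]$, $q=Q\cap S'$, $q'=Q'\cap S'$. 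Then $q,q'$ are non-zero irreducible monomial ideals of $S'$ with $\sqrt{q}=(x_1,\dots,x_t)$, $\sqrt{q'}=(x_{r+1},\dots,x_p)$, and now $p$ plays the role of ``$n$'' for the ring $S'$, so the hypothesis of Lemma \ref{lb} is satisfied for $q,q'$ inside $S'$.

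**Next I would apply Lemma \ref{lb}** to $q,q'\subset S'$ to obtain
\[
\sdepth_{S'}(q\cap q')\ \geq\ \Bigl\lceil\frac{r}{2}\Bigr\rceil+\Bigl\lceil\frac{p-t}{2}\Bigr\rceil .
\]
Then I would invoke \cite[Lemma 3.6]{HVZ} (exactly as in Theorem \ref{Low} and Theorem \ref{Up}), which gives $\sdepth_S((q\cap q')S)=(n-p)+\sdepth_{S'}(q\cap q')$. Finally one must check that $(q\cap q')S=Q\cap Q'$: since $Q$ is generated by monomials in $x_1,\dots,x_t$ and $Q'$ by monomials in $x_{r+1},\dots,x_p$, both sit inside $S'$, so $Q=qS$, $Q'=q'S$, and for monomial ideals extended from a polynomial subring in a disjoint set of extra variables one has $qS\cap q'S=(q\cap q')S$. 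Combining these,
\[
\sdepth_S(Q\cap Q')=(n-p)+\sdepth_{S'}(q\cap q')\ \geq\ (n-p)+\Bigl\lceil\frac{r}{2}\Bigr\rceil+\Bigl\lceil\frac{p-t}{2}\Bigr\rceil,
\]
which is the assertion.

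**The only genuinely delicate point** is the identity $(q\cap q')S=Q\cap Q'$, i.e.\ that intersection commutes with the flat base change $S'\hookrightarrow S=S'[x_{p+1},\dots,x_n]$; but since $S$ is free (in particular flat) over $S'$, tensoring the exact sequence $0\to q\cap q'\to S'\to S'/q\oplus S'/q'$ with $S$ shows $\,(q\cap q')S=qS\cap q'S$, and $qS=Q$, $q'S=Q'$ because the generators of $Q$ and $Q'$ already lie in $S'$. Everything else is a direct substitution into results proved earlier in the paper, so there is no real obstacle; the boundary cases ($0=r<t<p$ or $r\le t=p$) are handled by the same remark already appended to Lemma \ref{lb}, noting that when $t=p$ the term $\lceil\frac{p-t}{2}\rceil$ vanishes and the bound still holds.
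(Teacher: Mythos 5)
Your argument is correct and is exactly the approach the paper intends (its proof is a one-line remark that one should apply \cite[Lemma 3.6]{HVZ} together with Lemma \ref{lb}, there being $n-p$ free variables). You have simply spelled out the details—introducing $S'$, verifying $(q\cap q')S=Q\cap Q'$, and applying Lemma \ref{lb} inside $S'$—which is a faithful expansion of the same reduction.
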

\begin{proof} As usual we see that there are now $(n-p)$ free variables and it is enough to apply
\cite[Lemma 3.6]{HVZ} and Lemma \ref{lb}.
\end{proof}

\begin{Theorem}\label{Lob}
Let $Q$ and $Q'$ be two non-zero irreducible monomial ideals of $S$. Then
$$\sdepth_S (Q\cap Q')\geq
\dim(S/(Q+ Q'))+\lceil\frac{\dim(S/Q')-\dim(S/(Q+ Q'))}{2}\rceil+$$
$$\lceil\frac{\dim(S/Q)-\dim(S/(Q+ Q'))}{2}\rceil\geq
\lceil\frac{\dim(S/Q')+\dim(S/ Q)}{2}\rceil.$$
\end{Theorem}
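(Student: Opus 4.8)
The plan is to reduce the whole statement to Lemma~\ref{lob}, using only the dictionary between the dimensions of the quotients involved and the numbers of variables generating the radicals, followed by one elementary estimate on ceilings.

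First I would dispose of the degenerate case in which $Q$ and $Q'$ have the same associated prime, say $\sqrt{Q}=\sqrt{Q'}=(x_1,\ldots,x_r)$ after renumbering the variables. Since $Q$ and $Q'$ are irreducible they are generated by pure powers of $x_1,\ldots,x_r$, hence so is $Q\cap Q'$; writing $J:=(Q\cap Q')\cap K[x_1,\ldots,x_r]$ one has $Q\cap Q'=JS$ and $J\neq 0$ (because $Q\cap Q'\supseteq QQ'\neq 0$), so \cite[Lemma 3.6]{HVZ} gives $\sdepth_S(Q\cap Q')=(n-r)+\sdepth_{K[x_1,\ldots,x_r]}(J)\geq n-r$. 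On the other hand $\dim(S/Q)=\dim(S/Q')=\dim(S/(Q+Q'))=n-r$, so both ceiling terms in the asserted bound equal $\lceil 0/2\rceil=0$ and the whole right-hand side of the Theorem collapses to $n-r$; both inequalities are then immediate (the second with equality).

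Next, assuming $Q$ and $Q'$ have different associated primes, I would renumber the variables so that the generators of $\sqrt{Q}\setminus\sqrt{Q'}$ come first, then those of $\sqrt{Q}\cap\sqrt{Q'}$, then those of $\sqrt{Q'}\setminus\sqrt{Q}$. This places $Q,Q'$ in exactly the situation of the opening paragraph of Section~4, namely $\sqrt{Q}=(x_1,\ldots,x_t)$, $\sqrt{Q'}=(x_{r+1},\ldots,x_p)$ with $r,t,p$ in one of the three listed cases; which case occurs depends only on which of the three sets above is empty, and the case of equal radicals is the one already treated. Since then $\sqrt{Q+Q'}=(x_1,\ldots,x_p)$, one reads off $\dim(S/Q)=n-t$, $\dim(S/Q')=n-p+r$ and $\dim(S/(Q+Q'))=n-p$, so that $\dim(S/Q')-\dim(S/(Q+Q'))=r$ and $\dim(S/Q)-\dim(S/(Q+Q'))=p-t$. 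After these substitutions the first inequality of the Theorem reads $\sdepth_S(Q\cap Q')\geq (n-p)+\lceil r/2\rceil+\lceil(p-t)/2\rceil$, which is precisely Lemma~\ref{lob}.

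Finally, the second inequality is pure arithmetic: using $\lceil a\rceil+\lceil b\rceil\geq\lceil a+b\rceil$ together with the fact that an integer may be absorbed into a ceiling, one obtains
\begin{multline*}
(n-p)+\left\lceil\frac{r}{2}\right\rceil+\left\lceil\frac{p-t}{2}\right\rceil \geq (n-p)+\left\lceil\frac{r+p-t}{2}\right\rceil \\ =\left\lceil\frac{(n-p+r)+(n-t)}{2}\right\rceil =\left\lceil\frac{\dim(S/Q')+\dim(S/Q)}{2}\right\rceil ,
\end{multline*}
and in the equal-radical case of the first step this chain is just the equality $n-r=n-r$. I do not expect a genuine obstacle here; the only points that need a little care are checking that the three cases opening Section~4 really exhaust all configurations of two distinct monomial prime ideals, and keeping the floor/ceiling bookkeeping consistent with that used in Lemmas~\ref{ea}--\ref{lob}.
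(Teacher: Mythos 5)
Your proof is correct and follows essentially the same route as the paper: after the dimension dictionary $n-p=\dim(S/(Q+Q'))$, $r=\dim(S/Q')-\dim(S/(Q+Q'))$, $p-t=\dim(S/Q)-\dim(S/(Q+Q'))$, the first inequality is precisely Lemma~\ref{lob}, and the second is elementary ceiling arithmetic. You are somewhat more careful than the paper's terse proof in handling the equal-radical case separately (the paper leaves it implicit, cf.\ Remark~\ref{tr}) and in writing out the ceiling estimate.
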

\begin{proof} After  renumbering of variables, we may suppose as above that
\\$\sqrt{Q}=(x_1,\ldots,x_t)$, $\sqrt{Q'}=(x_{r+1},\ldots,x_p)$
for some integers $r,t,p$ with $1\leq r\leq t<p\leq n$, or  $0= r< t< p\leq n$, or $1\leq r\leq t=p\leq n$.
If $n=p$, $r=0$ then $\sqrt{Q}\subset\sqrt{Q'}$ and the inequality is trivial.
It is enough to apply Lemma \ref{lob} because $n-p=\dim(S/(Q+ Q'))$, $r=\dim(S/Q')-\dim(S/(Q+ Q'))$, $p-t=\dim(S/Q)-\dim(S/(Q+ Q')).$
\end{proof}

\begin{Remark} \label{tr}{\em If $Q,Q'$ are non-zero irreducible monomial ideals of $S$ with $\sqrt{Q}=\sqrt{Q'}$ then
we have $\sdepth_S (Q\cap Q')\geq1+\dim S/Q$.}
\end{Remark}

\begin{Example}
{\em Let $S=K[x_1,x_2]$, $Q=(x_1)$,$Q'=(x_1^2,x_2)$. We have $\sdepth(Q\cap Q')\geq $
$$\lceil\frac{\dim(S/Q')+\dim(S/ Q)}{2}\rceil=\lceil\frac{1+0}{2}\rceil=1$$
by the above theorem.
As $Q\cap Q'$ is not a principle ideal  its Stanley depth is   $<2$. Thus $\sdepth(Q\cap Q')=1.$}
\end{Example}

\begin{Example}
{\em Let $S=K[x_1,x_2,x_3,x_4,x_5]$, $Q=(x_1,x_2,x_3^2)$,$Q'=(x_3,x_4,x_5)$.
As $\dim(S/(Q+ Q'))=0$, $\dim S/Q =2$ and $\dim S/Q' =2$
we get $\sdepth(Q\cap Q')\geq $
$$\lceil\frac{\dim(S/Q')+\dim(S/ Q)}{2}\rceil=\lceil\frac{2+2}{2}\rceil=2$$
by the above theorem. Note also that $\sdepth(Q\cap Q'\cap K[x_1,x_2,x_4,x_5])=$
$$\sdepth(x_1x_4,x_1x_5,x_2x_4,x_2x_5)K[x_1,x_2,x_4,x_5]=3,$$ and
$$\sdepth(((Q\cap Q'):x_3)\cap K[x_1,x_2,x_4,x_5])=$$ $$\sdepth((x_1,x_2)K[x_1,x_2,x_4,x_5])=4-\lfloor\frac{2}{2}\rfloor=3,$$ by \cite{Sh}.
But $\sdepth(Q\cap Q')\geq 3$ because of the following Stanley decomposition
$$Q\cap Q'=x_1 x_4 K[x_1,x_4,x_5]\oplus x_1 x_5 K[x_1,x_2,x_5]\oplus x_2 x_4 K[x_1,x_2,x_4]\oplus x_2 x_5 K[x_2,x_4,x_5]$$
$$\oplus x_3^2 K[x_3,x_4,x_5]\oplus x_2 x_3 K[x_2,x_3,x_4]\oplus
x_1 x_3 K[x_1,x_2,x_3]\oplus x_1 x_3 x_4 K[x_1,x_2,x_4,x_5]\oplus$$ $$ x_1 x_3 x_5 K[x_1,x_3,x_5]\oplus x_2 x_3 x_5 K[x_2,x_3,x_4,x_5]\oplus x_1 x_2 x_4 x_5 K[x_1,x_2,x_4,x_5]\oplus$$ $$ x_1 x_3^2 x_4 K[x_1,x_3,x_4,x_5]\oplus x_1 x_2 x_3 x_5 K[x_1,x_2,x_3,x_5]\oplus x_1 x_2 x_3^2 x_4 K[x_1,x_2,x_3,x_4,x_5].$$}
\end{Example}

\vskip 0.5 cm
\section{Applications}

Let $I\subset S $ be a non-zero monomial ideal. A. Rauf presented  in \cite{R} the following
\begin{Question}\label{as} Does it hold the inequality
$$\sdepth I\geq 1+\sdepth S/I?$$
\end{Question}
The importance of this question is given by the following:
\begin{Proposition}\label{appl} Suppose that Stanley's Conjecture holds for cyclic $S$-modules and the above question has a positive answer
for all monomial ideals of $S$. Then  Stanley's Conjecture holds for all monomial ideals of $S$.
\end{Proposition}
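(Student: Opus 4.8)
The plan is to reduce the statement about arbitrary monomial ideals to the two hypotheses by decomposing along a single generator, exactly in the spirit of the inductive arguments used for Lemma \ref{lb} and Proposition \ref{low}. Let $I\subset S$ be a non-zero monomial ideal. First I would set up an induction on the number of variables $n$ (or, if that is cleaner, on $|G(I)|$ together with $n$). The base cases — $I$ principal, or $n$ small — are handled directly: when $I$ is principal, $\sdepth I=n$ and $\sdepth S/I=n-1$, so Question \ref{as} holds for such $I$ trivially, and the conjectural hypothesis on $S/I$ gives the rest. For the inductive step, I would pick a variable, say $x_n$, and split $I$ according to divisibility by $x_n$, writing $I = x_n(I:x_n)\oplus (I\cap K[x_1,\dots,x_{n-1}])S'$-type decompositions as $K$-vector spaces, following the pattern already exploited in the proofs above.

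The key computation is then to combine three ingredients. By the standing hypothesis, Stanley's Conjecture holds for the cyclic module $S/I$, so $\sdepth S/I \ge \depth S/I$. By the standing hypothesis, Question \ref{as} has a positive answer, so $\sdepth I \ge 1+\sdepth S/I$. Chaining these gives $\sdepth I \ge 1+\depth S/I$. So the whole proof collapses to the elementary homological identity $\depth I = 1+\depth S/I$, which follows from the short exact sequence $0\to I\to S\to S/I\to 0$ and the depth lemma (using $\depth S = n > \depth S/I$ when $I\ne 0$, or the trivial case $I=S$). Combining, $\sdepth I \ge 1+\depth S/I = \depth I$, which is precisely Stanley's Conjecture for the module $I$. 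Since $I$ was an arbitrary non-zero monomial ideal, and the conjecture is vacuous for $I=(0)$ and trivially true for $I=S$, this covers all monomial ideals.

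The only place requiring genuine care is making sure the two hypotheses are applied to modules of the same type for which they are assumed: "Stanley's Conjecture for cyclic $S$-modules" must be read to include the modules $S/I$ (it does, by definition of cyclic), and "the above question has a positive answer for all monomial ideals of $S$" must be applicable to $I$ itself (it is, by hypothesis). So the induction I sketched in the first paragraph is in fact unnecessary: the argument is a direct two-line implication once one records $\depth I = 1+\depth S/I$. The \emph{main obstacle}, such as it is, is purely bookkeeping — verifying $\depth I = 1 + \depth S/I$ in the degenerate cases ($I=S$, or $S/I$ of depth $0$) so that the depth lemma applies uniformly; there is no deep step, which is exactly why Proposition \ref{appl} is presented as motivation rather than as a hard theorem.
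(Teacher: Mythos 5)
Your proposal is correct and, once you discard the unnecessary induction sketch in the first paragraph (as you yourself note), it collapses to exactly the paper's one-line argument: $\sdepth I\geq 1+\sdepth S/I\geq 1+\depth S/I=\depth I$, using the depth lemma for the last equality. The paper's proof is identical, so there is nothing to reconcile.
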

For the proof note that $\sdepth I\geq 1+\sdepth S/I \geq 1+\depth S/I=\depth I$.
\begin{Remark}{\em In \cite{P} it is proved that Stanley's Conjecture holds for all multigraded cycle modules over $S=K[x_1,\ldots,x_5]$. If the above question has a positive answer then   Stanley's Conjecture holds for all monomial ideals of $S$. Actually this is true for all square free monomial ideals of $S$ as \cite{Po}
shows.}
\end{Remark}

We show that the above question holds for the intersection of two non-zero irreducible monomial ideals.
\begin{Proposition}\label{asia} Question \ref{as} has a positive answer for intersections of two  non-zero irreducible monomial ideals.
\end{Proposition}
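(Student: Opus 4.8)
The plan is to combine the exact formula for $\sdepth_S S/(Q\cap Q')$ supplied by Corollary \ref{eg} with the lower bound for $\sdepth_S(Q\cap Q')$ supplied by Theorem \ref{Lob}, and then verify that the inequality between these two closed expressions always holds. First I would dispose of the degenerate case in which $Q$ and $Q'$ have the same associated prime: then $Q\cap Q'$ is again a primary monomial ideal with $\sqrt{Q\cap Q'}=\sqrt{Q}$, so Lemma \ref{l2} gives $\sdepth_S S/(Q\cap Q')=\dim S/(Q\cap Q')=\dim S/Q$, while Remark \ref{tr} gives $\sdepth_S(Q\cap Q')\geq 1+\dim S/Q$; hence Question \ref{as} holds in this case.

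So assume $\sqrt{Q}\neq\sqrt{Q'}$. After renumbering of variables as in Theorem \ref{Lob} write $\sqrt{Q}=(x_1,\dots,x_t)$, $\sqrt{Q'}=(x_{r+1},\dots,x_p)$ with $r,t,p$ as there, and set $a=\dim S/Q$, $b=\dim S/Q'$, $c=\dim S/(Q+Q')$, $\alpha=a-c=p-t\geq 0$, $\beta=b-c=r\geq 0$. Because $\sqrt{Q}\neq\sqrt{Q'}$, the pair $(\alpha,\beta)$ is not $(0,0)$: if $\alpha=0$ then $p=t$, forcing $r\geq 1$; if $\beta=0$ then $r=0$, forcing $t<p$. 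Corollary \ref{eg} rewrites as $\sdepth_S S/(Q\cap Q')=c+\max\{\min\{\beta,\lceil\alpha/2\rceil\},\min\{\alpha,\lceil\beta/2\rceil\}\}$, while Theorem \ref{Lob} gives $\sdepth_S(Q\cap Q')\geq c+\lceil\alpha/2\rceil+\lceil\beta/2\rceil$. Thus it suffices to establish the purely arithmetic inequality
$$\lceil\alpha/2\rceil+\lceil\beta/2\rceil\ \geq\ 1+\max\{\min\{\beta,\lceil\alpha/2\rceil\},\min\{\alpha,\lceil\beta/2\rceil\}\}$$
for all non-negative integers $\alpha,\beta$ not both zero.

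Both sides are symmetric in $\alpha,\beta$, so I would assume $\alpha\geq\beta$, whence $\alpha\geq 1$ and $\min\{\alpha,\lceil\beta/2\rceil\}=\lceil\beta/2\rceil$, so the right-hand maximum is $\max\{\min\{\beta,\lceil\alpha/2\rceil\},\lceil\beta/2\rceil\}$. If $\lceil\alpha/2\rceil\geq\beta$ this equals $\beta$, and the claim becomes $\lceil\alpha/2\rceil\geq 1+\lfloor\beta/2\rfloor$, which holds since $\lceil\alpha/2\rceil\geq\max\{1,\beta\}\geq 1+\lfloor\beta/2\rfloor$ (using $\alpha\geq 1$ when $\beta=0$, and $\beta-\lfloor\beta/2\rfloor=\lceil\beta/2\rceil\geq 1$ when $\beta\geq 1$). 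If instead $\lceil\alpha/2\rceil<\beta$ then the maximum is $\lceil\alpha/2\rceil$ and the claim reduces to $\lceil\beta/2\rceil\geq 1$, which holds because $\beta>\lceil\alpha/2\rceil\geq 1$. This completes the argument. Since the deep content is entirely absorbed into Theorem \ref{Lob} and Corollary \ref{eg}, the only delicate point in the present proof is the bookkeeping in the boundary cases $\alpha=0$ or $\beta=0$, which is precisely where the hypotheses that $Q,Q'$ are non-zero with distinct radicals enter.
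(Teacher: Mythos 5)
Your proof is correct and follows essentially the same route as the paper: combine the lower bound for $\sdepth(Q\cap Q')$ from Lemma \ref{lob} (equivalently Theorem \ref{Lob}) with the exact value of $\sdepth S/(Q\cap Q')$ from Corollary \ref{eg}, reduce to an arithmetic inequality in ceilings, and dispose of the equal-radical case via Remark \ref{tr} (together with Lemma \ref{l2}). The only difference is that the paper asserts the inequality $1+\max\{\min\{r,\lceil\frac{p-t}{2}\rceil\},\min\{p-t,\lceil\frac{r}{2}\rceil\}\}\leq\lceil\frac{r}{2}\rceil+\lceil\frac{p-t}{2}\rceil$ without justification, whereas you supply the case analysis verifying it.
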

\begin{proof} First suppose that $Q,Q'$ have different associated prime ideals.
After renumbering of variables we may suppose as above that $\sqrt{Q}=(x_1,\ldots,x_t)$, $\sqrt{Q'}=(x_{r+1},\ldots,x_p)$
for some integers $r,t,p$ with $1\leq r\leq t<p\leq n$, or  $0= r< t< p\leq n$, or $1\leq r\leq t=p\leq n$. Then
$$\sdepth (Q\cap Q')\geq n-p+\lceil\frac{r}{2}\rceil+\lceil\frac{p-t}{2}\rceil$$
by Lemma \ref{lob}. Note that $\sdepth(S/(Q\cap Q'))=$
 $$n-p+\max\{\min\{r,\lceil\frac{p-t}{2}\rceil\},\min\{p-t,\lceil\frac{r}{2}\rceil\}\}$$
by Corollary \ref{eg}. Thus $$1+ \sdepth(S/(Q\cap Q'))\leq n-p+\lceil\frac{r}{2}\rceil+\lceil\frac{p-t}{2}\rceil\leq \sdepth (Q\cap Q').$$
Finally, if $Q,Q'$ have the same associated prime ideal then $\sdepth (Q\cap Q')\geq 1+\dim S/Q$ by Remark \ref{tr} and so $\sdepth (Q\cap Q')\geq 1+\sdepth S/(Q\cap Q')$.
\end{proof}

Next we will show that Stanley's Conjecture holds for intersections of two primary monomial ideals. We start with a simple lemma.

\begin{Lemma} \label{sc}
Let $Q$,$Q'$ be two primary ideals in $S=K[x_1,\ldots,x_n]$. Suppose $\sqrt{Q}=(x_1,\ldots,x_t)$ and $\sqrt{Q}=(x_{r+1},\ldots,x_p)$ for integers $0\leq r\leq t\leq p\leq n$. Then $\sdepth(S/(Q\cap Q'))\geq \depth(S/(Q\cap Q'))$, that is Stanley's Conjecture holds for \\
$S/(Q\cap Q')$.
\end{Lemma}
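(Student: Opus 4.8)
The plan is to strip off the variables outside $\{x_1,\ldots,x_p\}$ and then read off everything from the associated primes of $S/(Q\cap Q')$. For the reduction, note that when $p<n$ no variable $x_{p+1},\ldots,x_n$ occurs in a minimal generator of $Q$ or $Q'$, so, writing $S'=K[x_1,\ldots,x_p]$, $q=Q\cap S'$, $q'=Q'\cap S'$, we have $Q=qS$, $Q'=q'S$, hence $Q\cap Q'=(q\cap q')S$ and $S/(Q\cap Q')=\bigl(S'/(q\cap q')\bigr)\otimes_K K[x_{p+1},\ldots,x_n]$. Then $\sdepth_S S/(Q\cap Q')=(n-p)+\sdepth_{S'}S'/(q\cap q')$ by \cite[Lemma 3.6]{HVZ}, and likewise $\depth_S S/(Q\cap Q')=(n-p)+\depth_{S'}S'/(q\cap q')$ because $x_{p+1},\ldots,x_n$ is a regular sequence on $S/(Q\cap Q')$. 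Since $q,q'$ are again primary monomial ideals with $\sqrt q=(x_1,\ldots,x_t)$, $\sqrt{q'}=(x_{r+1},\ldots,x_p)$, it suffices to prove the statement under the extra hypothesis $p=n$, i.e. $0\le r\le t\le n$.

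Assume $p=n$. The inclusion $S/(Q\cap Q')\hookrightarrow S/Q\oplus S/Q'$ shows $\Ass\bigl(S/(Q\cap Q')\bigr)\subseteq\{\pp,\qq\}$, where $\pp=\sqrt Q=(x_1,\ldots,x_t)$ and $\qq=\sqrt{Q'}=(x_{r+1},\ldots,x_n)$. If this set has a single element, then $Q\cap Q'$ is a primary monomial ideal and Lemma \ref{l2} yields $\sdepth S/(Q\cap Q')=\dim S/(Q\cap Q')=\depth S/(Q\cap Q')$, so we are done. If $\Ass\bigl(S/(Q\cap Q')\bigr)=\{\pp,\qq\}$ but $t=n$ or $r=0$, then $\pp$ or $\qq$ equals $\mm=(x_1,\ldots,x_n)$, so $\depth S/(Q\cap Q')=0$ and the inequality is trivial because $\sdepth$ is always nonnegative. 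The only remaining case is $\Ass\bigl(S/(Q\cap Q')\bigr)=\{\pp,\qq\}$ with $1\le r$ and $t\le n-1$.

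In this last case the crucial point — and the one I expect to require the most care — is that $\depth S/(Q\cap Q')=1$. Since $r\le t$, the variable sets of $\pp$ and $\qq$ cover $\{x_1,\ldots,x_n\}$ with no gap, so $\pp+\qq=\mm$; hence $Q+Q'$ has radical $\mm$, is $\mm$-primary, and $\depth S/(Q+Q')=0$ by Lemma \ref{l2}. On the other hand $\depth S/Q=\dim S/Q=n-t\ge1$ and $\depth S/Q'=\dim S/Q'=r\ge1$, again by Lemma \ref{l2}. Applying the standard depth inequalities to $0\to S/(Q\cap Q')\to S/Q\oplus S/Q'\to S/(Q+Q')\to0$: the bound for the submodule gives $\depth S/(Q\cap Q')\ge\min\{\min\{n-t,r\},\,\depth S/(Q+Q')+1\}=1$, while the bound for the quotient gives $0=\depth S/(Q+Q')\ge\min\{\depth S/(Q\cap Q')-1,\,\min\{n-t,r\}\}$, which, since $\min\{n-t,r\}\ge1$, forces $\depth S/(Q\cap Q')\le1$. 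Thus $\depth S/(Q\cap Q')=1$, and because $\depth S/(Q\cap Q')\ge1$ we conclude $\sdepth S/(Q\cap Q')\ge1=\depth S/(Q\cap Q')$ by \cite[Corollary 1.6]{C1}. Everything else — the reduction to $p=n$, the uses of Lemma \ref{l2}, and the implication $\depth\ge1\Rightarrow\sdepth\ge1$ — is routine; the only genuinely delicate point is the upper bound $\depth S/(Q\cap Q')\le1$, which hinges on $S/Q$ and $S/Q'$ both having positive depth while $S/(Q+Q')$ has depth zero.
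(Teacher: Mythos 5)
Your argument is correct and follows essentially the same route as the paper: reduce via \cite[Lemma 3.6]{HVZ} to the ring $S'=K[x_1,\ldots,x_p]$, apply the Depth Lemma to the sequence $0\to S'/(q\cap q')\to S'/q\oplus S'/q'\to S'/(q+q')\to 0$ together with Lemma \ref{l2} to get $\depth S'/(q\cap q')=1$ in the non-degenerate case, and invoke \cite[Corollary 1.6]{C1} for $\sdepth\geq 1$. The only (minor) divergence is in the degenerate cases: you split them into the single-associated-prime case (handled by Lemma \ref{l2}) and the $\depth=0$ case, which is slightly more careful than the paper's compressed ``$\depth\leq n-p\leq\sdepth$'' when $r=0$ or $t=p$.
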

\begin{proof}
If either $r=0$, or $t=p$ then $\depth S/(Q\cap Q')\leq n-p\leq \sdepth(S/(Q\cap Q'))$ by   \cite[Lemma 3.6]{HVZ}. Now suppose that $r>0$, $t<p$ and let
 $S'=K[x_1,\ldots,x_p]$ and $q=Q\cap S'$, $q'=Q'\cap S'$. Consider the following exact sequence of $S'$-modules
$$0\rightarrow S'/(q\cap q')\rightarrow S'/q \oplus S'/q'\rightarrow S'/(q+q')\rightarrow 0.$$
\\ By Lemma \ref{l2}
$$\depth(S'/q \oplus S'/q')=\min\{\depth(S'/q),\depth(S'/q')\}=$$
$$\min\{\dim(S'/q),\dim(S'/q')\}=
\min\{r,p-t\}\geq1>0=\depth(S'/(q+q')).$$
\\ Thus by Depth Lemma (see e.g. \cite{BH}) $$\depth(S'/q\cap q')=\depth(S'/(q+q'))+1=1.$$
But $\sdepth(S'/(q\cap q'))\geq1$ by \cite[Corollary 1.6]{C1} and so
$$\sdepth(S/(Q\cap Q'))=\sdepth(S'/(q\cap q'))+n-p\geq 1+n-p=$$
$$n-p+\depth(S'/(q\cap q')) =\depth(S/(Q\cap Q'))$$
by \cite[Lemma 3.6]{HVZ}.
\end{proof}

\begin{Theorem}\label{sci} Let $Q$,$Q'$ be two non-zero irreducible ideals of $S$.
Then \\ $\sdepth (Q\cap Q')\geq
\depth (Q\cap Q')$, that is
Stanley's Conjecture holds for $Q\cap Q'$.
\end{Theorem}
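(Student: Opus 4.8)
The plan is to reduce the statement to the two cases already available in the earlier results, according to whether $Q$ and $Q'$ have equal or different associated prime ideals. If $\sqrt{Q}=\sqrt{Q'}$, then by Remark \ref{tr} we have $\sdepth(Q\cap Q')\geq 1+\dim S/Q$, while $\depth(Q\cap Q')\leq \dim S/(Q\cap Q')=\dim S/Q$ (actually one sees $\depth(Q\cap Q')=1+\depth S/(Q\cap Q')$ is not even needed here), so the inequality is immediate. Thus the substance is the case of different associated primes.

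Assume now $\sqrt{Q}\neq\sqrt{Q'}$. After renumbering variables, write $\sqrt{Q}=(x_1,\ldots,x_t)$ and $\sqrt{Q'}=(x_{r+1},\ldots,x_p)$ with $1\leq r\leq t<p\leq n$, or $0=r<t<p\leq n$, or $1\leq r\leq t=p\leq n$, exactly as in the setup preceding Lemma \ref{ea}. The key point is that $\depth S/(Q\cap Q')$ is controlled by Lemma \ref{sc}: its proof shows $\depth(S/(Q\cap Q'))=n-p+1$ when $r>0$ and $t<p$ (using the Depth Lemma on the sequence $0\to S'/(q\cap q')\to S'/q\oplus S'/q'\to S'/(q+q')\to 0$ with $S'=K[x_1,\ldots,x_p]$), and $\depth(S/(Q\cap Q'))\leq n-p$ in the degenerate cases $r=0$ or $t=p$. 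On the other hand, the Depth Lemma applied to $0\to Q\cap Q'\to S\to S/(Q\cap Q')\to 0$ gives $\depth(Q\cap Q')=\depth(S/(Q\cap Q'))+1$ as long as $\depth(S/(Q\cap Q'))<n$, which holds here since $Q\cap Q'$ is a nonzero proper ideal with associated primes of positive height; hence $\depth(Q\cap Q')\leq n-p+2$ in the generic case and $\leq n-p+1$ in the degenerate ones.

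It remains to compare this with the lower bound for $\sdepth(Q\cap Q')$ from Lemma \ref{lob}, namely $\sdepth(Q\cap Q')\geq n-p+\lceil r/2\rceil+\lceil(p-t)/2\rceil$. In the generic case $r\geq 1$ and $p-t\geq 1$, so $\lceil r/2\rceil+\lceil(p-t)/2\rceil\geq 2$, giving $\sdepth(Q\cap Q')\geq n-p+2\geq \depth(Q\cap Q')$. In the case $r=0$, $t<p$, Lemma \ref{lob} (and the remark at the end of Lemma \ref{lb} that the argument survives $r=0$) gives $\sdepth(Q\cap Q')\geq n-p+\lceil(p-t)/2\rceil\geq n-p+1\geq \depth(Q\cap Q')$; the case $t=p$, $r\geq 1$ is symmetric. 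This disposes of all cases.

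The main obstacle is bookkeeping at the boundary cases rather than any deep idea: one must make sure that the inequality $\depth(Q\cap Q')=\depth(S/(Q\cap Q'))+1$ is valid (i.e. that $\depth S/(Q\cap Q')$ is strictly less than $n$, equivalently that $Q\cap Q'\neq 0$ and is not the whole ring — true since both $Q,Q'$ are nonzero proper ideals), and that Lemma \ref{lob}'s bound is correctly read off in the degenerate configurations $r=0$ or $t=p$, where one of the two ceiling terms may vanish but the $n-p$ term together with the surviving ceiling still beats the correspondingly smaller value of $\depth$. Once the parity inequality $\lceil r/2\rceil+\lceil(p-t)/2\rceil\geq 1+[\,r>0\text{ and }t<p\,]$ is observed, the proof closes immediately.
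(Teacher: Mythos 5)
Your proposal takes a genuinely different route from the paper. The paper proves this theorem through the chain
$\sdepth(Q\cap Q')\geq 1+\sdepth S/(Q\cap Q')\geq 1+\depth S/(Q\cap Q')=\depth(Q\cap Q')$, invoking Proposition \ref{asia} (a positive answer to Question \ref{as} for $Q\cap Q'$), Lemma \ref{sc}, and the mechanism of Proposition \ref{appl}; Proposition \ref{asia} in turn needs the exact value of $\sdepth S/(Q\cap Q')$ from Corollary \ref{eg}, hence both Theorem \ref{Low} and Theorem \ref{Up}. You instead compare Lemma \ref{lob}'s lower bound $\sdepth(Q\cap Q')\geq n-p+\lceil r/2\rceil+\lceil(p-t)/2\rceil$ directly against $\depth(Q\cap Q')=\depth S/(Q\cap Q')+1$, where $\depth S/(Q\cap Q')$ is read off from Lemma \ref{sc}'s proof ($n-p+1$ in the generic case, $\leq n-p$ at the boundary). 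This bypasses Corollary \ref{eg} and the upper-bound machinery of Section 2 entirely, so it is somewhat more self-contained; the case analysis and the parity observation $\lceil r/2\rceil+\lceil(p-t)/2\rceil\geq 2$ when $r\geq 1$, $p-t\geq 1$ are correct.

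There is, however, a flaw in your treatment of the case $\sqrt{Q}=\sqrt{Q'}$. You assert $\depth(Q\cap Q')\leq\dim S/(Q\cap Q')$, and add that the identity $\depth(Q\cap Q')=1+\depth S/(Q\cap Q')$ is ``not even needed.'' This is backwards: $Q\cap Q'$ is a nonzero ideal in the domain $S$, so as a module it has $\dim(Q\cap Q')=n$, and the bound $\depth M\leq\dim M$ gives only $\depth(Q\cap Q')\leq n$, not $\leq\dim S/(Q\cap Q')$. (A principal ideal already has depth $n$.) In fact here $Q\cap Q'$ is a primary monomial ideal, $S/(Q\cap Q')$ is Cohen--Macaulay of dimension $\dim S/Q<n$, and $\depth(Q\cap Q')=1+\dim S/Q$ --- so the depth identity you dismiss is precisely what is needed, and Remark \ref{tr}'s bound $\sdepth(Q\cap Q')\geq 1+\dim S/Q$ then gives the conclusion with equality. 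The case still closes; only the stated justification must be repaired.
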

\begin{proof} By Proposition \ref{asia} the Question \ref{as} has a positive answer, so by the proof of Proposition \ref{appl} it is enough to know that
Stanley's Conjecture holds for $S/(Q\cap Q')$. This is given by the above lemma.
\end{proof}

Next we consider the cycle module given by an intersection of 3 irreducible ideals.

\begin{Lemma}\label{3}
Let $Q_1$,$Q_2$, $Q_3$ be three non-zero irreducible monomial ideals of $S=K[x_1,\ldots,x_n]$. Then
$$\sdepth((Q_2\cap Q_3)/(Q_1\cap Q_2\cap Q_3))\geq $$
$$\dim(S/(Q_1+ Q_2+ Q_3))+\lceil \frac{\dim(S/(Q_1+Q_2))-\dim(S/(Q_1+Q_2+Q_3))}{2}\rceil  +$$
$$\lceil \frac{\dim(S/(Q_1+Q_3))-\dim(S/(Q_1+Q_2+Q_3))}{2}\rceil\geq $$
$$\lceil \frac{\dim(S/(Q_1+Q_2))+\dim(S/(Q_1+Q_3))}{2}\rceil.$$
\end{Lemma}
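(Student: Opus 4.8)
The plan is to mimic the proof of Lemma \ref{lb}, applied to the module $(Q_2\cap Q_3)/(Q_1\cap Q_2\cap Q_3)$ rather than to an intersection of two ideals. First I would normalize the setup: after renumbering variables, write $\sqrt{Q_1}=(x_1,\dots,x_t)$, $\sqrt{Q_2}=(x_{r+1},\dots,x_p)$, $\sqrt{Q_3}=(x_{s+1},\dots,x_q)$ (adjusting so the three radicals sit in the indicated blocks of variables). The point is that $Q_1$ is irreducible, hence generated by powers of the variables $x_1,\dots,x_t$, so the canonical injection $(Q_2\cap Q_3)/(Q_1\cap Q_2\cap Q_3)\hookrightarrow S/Q_1$ together with the Stanley decomposition of $S/Q_1$ from Lemma \ref{l1} lets me split $(Q_2\cap Q_3)/(Q_1\cap Q_2\cap Q_3)$ into pieces indexed by monomials $u\in K[x_1,\dots,x_t]\setminus Q_1$. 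Each such piece is $Q_2\cap Q_3\cap uK[x_{t+1},\dots,x_n]$, which equals $u\bigl((Q_2\cap Q_3):u)\cap K[x_{t+1},\dots,x_n]\bigr)$ when $u$ does not lie in the variables outside $\sqrt{Q_2}\cup\sqrt{Q_3}$, and is a free Stanley space otherwise.

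Next I would observe that for each relevant $u$, the colon ideal $(Q_2:u)$ and $(Q_3:u)$ are again irreducible ideals generated (up to powers) by the same variables as $Q_2$ and $Q_3$ respectively, so $\bigl((Q_2\cap Q_3):u\bigr)\cap K[x_{t+1},\dots,x_n]$ is an intersection of two non-zero irreducible monomial ideals in the polynomial ring $K[x_{t+1},\dots,x_n]$ — provided $u\notin Q_2$ and $u\notin Q_3$. To this intersection I apply Theorem \ref{Lob} (equivalently Lemma \ref{lob}): its Stanley depth is at least
$$
d+\Bigl\lceil\tfrac{a-d}{2}\Bigr\rceil+\Bigl\lceil\tfrac{b-d}{2}\Bigr\rceil,
$$
where $a$, $b$, $d$ are the dimensions of the quotients by $(Q_2:u)$, $(Q_3:u)$, and their sum, computed inside $K[x_{t+1},\dots,x_n]$; because colons by monomials in the "middle" variables do not change which variables generate the radicals, these numbers are exactly the quantities appearing in the statement, namely $\dim S/(Q_1+Q_2+Q_3)$, $\dim S/(Q_1+Q_2)$, $\dim S/(Q_1+Q_3)$, once one accounts for the $n-t$ free variables that were peeled off. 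I would also handle the few boundary pieces (the free Stanley spaces coming from $u\in Q_2\cap Q_3$, and the pieces with $u$ involving a middle variable that already lands in $Q_2$ or $Q_3$, where a colon becomes the whole ring) separately: those contribute Stanley spaces of dimension $\geq n-t\geq$ the claimed bound, or can be absorbed exactly as in the proof of Lemma \ref{lb}. Assembling all pieces via \cite[Lemma 2.2]{R} (direct sums of Stanley decompositions) gives a Stanley decomposition of $(Q_2\cap Q_3)/(Q_1\cap Q_2\cap Q_3)$ whose sdepth is bounded below by the minimum over all pieces, which is the asserted quantity; the final inequality $\geq\lceil(\dim S/(Q_1+Q_2)+\dim S/(Q_1+Q_3))/2\rceil$ is the elementary estimate $d+\lceil(a-d)/2\rceil+\lceil(b-d)/2\rceil\geq\lceil(a+b)/2\rceil$ used already in Theorem \ref{Lob}.

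The main obstacle I anticipate is bookkeeping the variable partitions: the three radicals $\sqrt{Q_1},\sqrt{Q_2},\sqrt{Q_3}$ carve the $n$ variables into up to eight overlap regions, and one must check that after passing to $S/Q_1$ and colonizing, the resulting two-irreducible-ideal intersection in $K[x_{t+1},\dots,x_n]$ really has the dimension data matching $\dim S/(Q_1+Q_2)$ etc., uniformly in $u$. The key fact making this work — and the one I would state explicitly — is that $\sdepth$ of an irreducible monomial ideal depends only on the ambient number of variables and the number of variables in its radical (cited from \cite{C2}, \cite{Sh} in the proof of Theorem \ref{Low}), so colon operations by monomials in the auxiliary variables leave the relevant sdepth values unchanged; this is exactly the device that was used for the two-ideal case and it propagates to three ideals without essential change.
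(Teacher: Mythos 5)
Your plan is essentially the paper's proof: inject $(Q_2\cap Q_3)/(Q_1\cap Q_2\cap Q_3)$ into $S/Q_1$, decompose $S/Q_1$ via Lemma~\ref{l1} into pieces $uK[\ldots]$, observe that each piece of the submodule is $u$ times a (colon of an) intersection of two irreducible ideals, and apply the two-ideal lower bound (Lemma~\ref{lb}/Lemma~\ref{lob}) to each such piece. The one place where the paper is noticeably cleaner than your sketch is exactly the step you flag as a concern: instead of normalizing $\sqrt{Q_1},\sqrt{Q_2},\sqrt{Q_3}$ into three separate (and overlapping) blocks and then tracking ``up to eight overlap regions,'' the paper normalizes only $\sqrt{Q_1}=(x_1,\ldots,x_t)$ together with $\sqrt{Q_2+Q_3}=(x_{r+1},\ldots,x_p)$. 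This has two payoffs: (i) the ambient ring is immediately cut down to $S'=K[x_1,\ldots,x_p]$ (the $n-p$ free variables are added back at the very end by \cite[Lemma~3.6]{HVZ}), and (ii) inside $K[x_{t+1},\ldots,x_p]$ the restricted radicals of $q_2$ and $q_3$ automatically span all the variables, so the $p=n$ hypothesis of Lemma~\ref{lb} is met without further case analysis; the required identification $\dim S'/(q_1+q_i)=\dim K[x_{t+1},\ldots,x_p]/(q_i\cap K[x_{t+1},\ldots,x_p])$ then drops out of the same Lemma~\ref{l1} decomposition applied to $q_i/(q_1\cap q_i)$. Your version would reach the same bound, but at the cost of a more elaborate verification that the colons' dimensions match the claimed $\dim S/(Q_1+Q_2)$, $\dim S/(Q_1+Q_3)$, $\dim S/(Q_1+Q_2+Q_3)$ for each $u$; adopting the $\sqrt{Q_2+Q_3}$ block would remove that difficulty.
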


\begin{proof}
Renumbering the variables we may assume that $\sqrt{Q_1}=(x_1,\ldots,x_t)$ and $\sqrt{Q_2+Q_3}=(x_{r+1},\ldots,x_p)$, where $0\leq r\leq t<p\leq n.$
If $t=p$ then $Q_1+Q_2=Q_1+Q_3$ and the inequality is trivial by \cite[Lemma 3.6]{HVZ}.
Let $S'=K[x_1,\ldots,x_p]$ and $q_1=Q_1\cap S'$, $q_2=Q_2\cap S'$, $q_3=Q_3\cap S'.$
 We have a canonical injective map $(q_2\cap q_3)/(q_1\cap q_2\cap q_3)\longrightarrow S'/q_1$. Now by Lemma \ref{l1}
 we have $$S'/q_1=\oplus uK[x_{t+1},\ldots,x_p]$$ and so
$$(q_2\cap q_3)/(q_1\cap q_2\cap q_3)=\oplus((q_2\cap q_3)\cap uK[x_{t+1},\ldots,x_p]),$$
where $u$ runs in the monomials of $K[x_1,\ldots,x_t]\setminus(q_1\cap K[x_1,\ldots,x_t]).$
 If \\
 $u\in K[x_1,\ldots,x_r]$ then $(q_2\cap q_3)\cap uK[x_{t+1},\ldots,x_p]=u(q_2\cap q_3\cap K[x_{t+1},\ldots,x_p])$ and if $u\not\in K[x_1,\ldots,x_r]$ then $(q_2\cap q_3)\cap uK[x_{t+1},\ldots,x_p]=u(((q_2\cap q_3):u)\cap K[x_{t+1},\ldots,x_p]).$ Since $(q_2\cap q_3):u$ is still an intersection of irreducible monomial ideals
 we get by Lemma \ref{lb} that $$\sdepth (((q_2\cap q_3):u)\cap K[x_{t+1},\ldots,x_p])\geq $$ $$\lceil\frac{\dim K[x_{t+1},\ldots,x_p]/q_2\cap K[x_{t+1},\ldots,x_p]}{2}\rceil+$$ $$\lceil\frac{\dim K[x_{t+1},\ldots,x_p]/q_3\cap K[x_{t+1},\ldots,x_p]}{2}\rceil
.$$

 Also we have
 $$q_2/(q_1\cap q_2)=\oplus u(q_2\cap K[x_{t+1},\ldots,x_p]),$$
 and it follows
 $$S'/(q_1+q_2)\cong (S'/q_1)/(q_2/(q_1\cap q_2))=\oplus u(K[x_{t+1},\ldots,x_p]/q_2\cap K[x_{t+1},\ldots,x_p]).$$
 Thus $\dim S'/(q_1+q_2)=\dim K[x_{t+1},\ldots,x_p]/q_2\cap K[x_{t+1},\ldots,x_p]$ and similarly
 \\$\dim S'/(q_1+q_3)=\dim K[x_{t+1},\ldots,x_p]/q_3\cap K[x_{t+1},\ldots,x_p].$ Hence
 $$\sdepth((q_2\cap q_3)/(q_1\cap q_2\cap q_3))\geq \lceil \frac{\dim(S'/(q_1+q_2))}{2}\rceil + \lceil \frac{\dim(S'/(q_1+q_3))}{2}\rceil =$$
 $$\lceil \frac{\dim(S/(Q_1+Q_2))-\dim(S/(Q_1+Q_2+Q_3))}{2}\rceil + $$
 $$\lceil \frac{\dim(S/(Q_1+Q_3))-\dim(S/(Q_1+Q_2+Q_3))}{2}\rceil .$$
Now it is enough to apply \cite[Lemma 3.6]{HVZ}.
\end{proof}
\begin{Proposition}\label{s31} Let $Q_1$,$Q_2$, $Q_3$ be three non-zero irreducible ideals of $S$ and $R=S/Q_1\cap Q_2\cap Q_3$. Suppose that $\dim S/(Q_1+Q_2+Q_3)=0$.
Then $\sdepth R\geq$
$$ \max\{\min\{ \sdepth S/(Q_2\cap Q_3), \lceil \frac{\dim(S/(Q_1+Q_2))}{2}\rceil+\lceil \frac{\dim(S/(Q_1+Q_3))}{2}\rceil\},$$
$$\min \{ \sdepth S/(Q_1\cap Q_3),  \lceil \frac{\dim(S/(Q_1+Q_2))}{2}\rceil+\lceil \frac{\dim(S/(Q_2+Q_3))}{2}\rceil\},$$
$$\min \{ \sdepth S/(Q_1\cap Q_2),  \lceil \frac{\dim(S/(Q_3+Q_2))}{2}\rceil+\lceil \frac{\dim(S/(Q_1+Q_3))}{2}\rceil\}\}.$$
\end{Proposition}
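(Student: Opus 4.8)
The plan is to imitate the proof of Lemma \ref{l3}, applied three times, once for each way of singling out one of the three ideals $Q_1,Q_2,Q_3$. First I would fix an index, say $1$, and use the inclusion $Q_1\cap Q_2\cap Q_3\subseteq Q_2\cap Q_3$ to write the short exact sequence of $\ZZ^n$-graded $S$-modules
$$0\to (Q_2\cap Q_3)/(Q_1\cap Q_2\cap Q_3)\to R\to S/(Q_2\cap Q_3)\to 0.$$
Then \cite[Lemma 2.2]{R}, the same ingredient already used in Lemma \ref{l3}, gives
$$\sdepth R\geq\min\{\sdepth S/(Q_2\cap Q_3),\ \sdepth((Q_2\cap Q_3)/(Q_1\cap Q_2\cap Q_3))\}.$$

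Next I would bound the kernel term from below by Lemma \ref{3}. Since $\dim S/(Q_1+Q_2+Q_3)=0$, the leading summand $\dim(S/(Q_1+Q_2+Q_3))$ appearing in Lemma \ref{3} vanishes and the two half-dimensions there collapse, so
$$\sdepth((Q_2\cap Q_3)/(Q_1\cap Q_2\cap Q_3))\geq \lceil\tfrac{\dim(S/(Q_1+Q_2))}{2}\rceil+\lceil\tfrac{\dim(S/(Q_1+Q_3))}{2}\rceil.$$
Plugging this into the previous inequality yields exactly the first of the three expressions inside the maximum.

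Then I would repeat these two steps with $Q_2$ in the distinguished role, using the sequence $0\to (Q_1\cap Q_3)/(Q_1\cap Q_2\cap Q_3)\to R\to S/(Q_1\cap Q_3)\to 0$ together with Lemma \ref{3}, to obtain the second expression, and with $Q_3$ in the distinguished role to obtain the third. The hypothesis $\dim S/(Q_1+Q_2+Q_3)=0$ is symmetric in the indices, so it feeds into each of the three instances of Lemma \ref{3}. Since $\sdepth R$ is bounded below by each of the three quantities separately, it is bounded below by their maximum, which is the assertion.

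I do not anticipate a serious obstacle here: everything is assembled from Lemma \ref{3} and the behaviour of Stanley depth under short exact sequences, both already at hand. The only points that need care are checking that the three short exact sequences are genuinely exact, which is immediate from $Q_1\cap Q_2\cap Q_3\subseteq Q_i\cap Q_j$, and that Lemma \ref{3} applies after permuting indices, which it does since its hypotheses only require the three ideals to be non-zero and irreducible.
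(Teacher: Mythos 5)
Your proposal is correct and follows essentially the same route the paper intends: the paper's one-line hint is to combine Lemma~\ref{l3} with Lemma~\ref{3}, and you have simply unfolded Lemma~\ref{l3} into the underlying short exact sequence together with the Rauf inequality, applied once for each choice of distinguished index, then used Lemma~\ref{3} (with the hypothesis $\dim S/(Q_1+Q_2+Q_3)=0$ making the leading summand vanish) to bound the three kernel terms. No gap; this is the intended argument.
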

For the proof apply Lemma \ref{l3} and Lemma  \ref{3}.

\vskip 0.5 cm
\begin{Theorem}\label{s3} Let $Q_1$,$Q_2$, $Q_3$ be three non-zero irreducible ideals of $S$  and $R=S/(Q_1\cap Q_2\cap Q_3)$. Then $\sdepth R\geq \depth R$, that is
Stanley's Conjecture holds for  $R$.
\end{Theorem}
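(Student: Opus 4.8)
The plan is to reduce the Stanley Conjecture for $R=S/(Q_1\cap Q_2\cap Q_3)$ to lower bounds we have already established, by first computing (or estimating) $\depth R$ via the Depth Lemma and then matching it against the sdepth bound coming from Proposition \ref{s31}. First I would handle the degenerate cases where some $\sqrt{Q_i}$ is contained in another, or where two of the radicals coincide; in such situations either $Q_i\cap Q_j$ reduces essentially to a single primary ideal (so Lemma \ref{l2} and \cite[Lemma 3.6]{HVZ} apply directly), or the intersection of three collapses to an intersection of two, and Theorem \ref{sci} already gives the conclusion. So after renumbering of variables we may assume the $\sqrt{Q_i}$ are pairwise incomparable. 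Set $P_i=\sqrt{Q_i}$.

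Next I would compute $\depth R$. The natural tool is the Mayer--Vietoris type exact sequence
\[
0\to S/(Q_1\cap Q_2\cap Q_3)\to S/(Q_1\cap Q_2)\oplus S/Q_3\to S/((Q_1\cap Q_2)+Q_3)\to 0,
\]
together with the Depth Lemma (see \cite{BH}). By Lemma \ref{l2}, $\depth S/Q_3=\dim S/Q_3$; the depth of $S/(Q_1\cap Q_2)$ is controlled by Lemma \ref{sc} and the Depth Lemma applied once more to $0\to S/(Q_1\cap Q_2)\to S/Q_1\oplus S/Q_2\to S/(Q_1+Q_2)\to 0$; and $S/((Q_1\cap Q_2)+Q_3)=S/((Q_1+Q_3)\cap(Q_2+Q_3))$ is again an intersection of two primary ideals whose depth is governed by Lemma \ref{sc}. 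Chasing these three exact sequences expresses $\depth R$ in terms of the dimensions $\dim S/(Q_i+Q_j)$ and $\dim S/(Q_1+Q_2+Q_3)$; concretely one expects
\[
\depth R=\dim S/(Q_1+Q_2+Q_3)+1
\]
in the generic situation (and one checks the small cases separately). Since all three $Q_i$ are nonzero, $\sdepth R\geq 1+\dim S/(Q_1+Q_2+Q_3)$ follows from \cite[Corollary 1.6]{C1} combined with \cite[Lemma 3.6]{HVZ} whenever $\dim S/(Q_1+Q_2+Q_3)\geq 0$, but to get a sharp comparison I would instead feed the explicit lower bound of Proposition \ref{s31} into the inequality.

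The heart of the argument is then to show that the maximum in Proposition \ref{s31} dominates $\depth R$. For each of the three symmetric terms, the inner $\min$ pairs an sdepth of a two-fold intersection quotient $S/(Q_i\cap Q_j)$—which by Corollary \ref{eg} equals an explicit expression in the $\dim S/(Q_k+Q_l)$—against a sum of two ceilings. Using the identity $\lceil a/2\rceil+\lceil b/2\rceil\geq \lceil (a+b)/2\rceil$ and the submodularity-type inequalities among the $\dim S/(Q_i+Q_j)$ (for instance $\dim S/(Q_1+Q_2)\geq \dim S/(Q_1+Q_2+Q_3)$), one checks term by term that each $\min$ is at least $\dim S/(Q_1+Q_2+Q_3)+1$, hence so is the $\max$. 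Matching this with the computed value of $\depth R$ closes the proof. The main obstacle I anticipate is the bookkeeping in the depth computation: the Depth Lemma only gives inequalities unless one knows the connecting maps behave well, so one must verify that at each stage the "$+1$" is actually attained (rather than merely bounded), which requires knowing $\depth S/((Q_1+Q_3)\cap(Q_2+Q_3))$ exactly; here the earlier Lemma \ref{sc} is designed precisely to supply that equality, and the degenerate cases flagged at the start are exactly those where the $+1$ fails and must be treated by hand.
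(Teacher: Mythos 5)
The structure of your plan is close in spirit to the paper's proof, but your central depth computation is off by one, and the error is not in a ``small case'': it is exactly the generic situation. After reducing (via \cite[Lemma 3.6]{HVZ}) to $\dim S/(Q_1+Q_2+Q_3)=0$, consider the generic case where all $\dim S/Q_i>1$ and all $\dim S/(Q_i+Q_j)\geq 1$. From
\[
0\to S/(Q_1+Q_2)\cap(Q_1+Q_3)\to S/(Q_1+Q_2)\oplus S/(Q_1+Q_3)\to S/(Q_1+Q_2+Q_3)\to 0
\]
the Depth Lemma gives $\depth S/(Q_1+Q_2)\cap(Q_1+Q_3)=0+1=1$; then from
\[
0\to R\to S/Q_1\oplus S/(Q_2\cap Q_3)\to S/(Q_1+Q_2)\cap(Q_1+Q_3)\to 0
\]
the middle term has depth $\geq 2$ (by Lemma \ref{l2} and Lemma \ref{sc}) while the right term has depth $1$, so the Depth Lemma gives $\depth R=2$, i.e.\ $\dim S/(Q_1+Q_2+Q_3)+2$ before the reduction. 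You predict $\dim S/(Q_1+Q_2+Q_3)+1$, which misses one of the ``$+1$'' contributions from the two nested applications of the Depth Lemma. Consistently with this, the sdepth lower bound you propose to verify --- ``each $\min$ is at least $\dim S/(Q_1+Q_2+Q_3)+1$'' --- is too weak to match the true depth, so your plan does not close the proof.

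Once the target is corrected to $\dim S/(Q_1+Q_2+Q_3)+2$, your strategy of bounding every term of Proposition \ref{s31} from below \emph{can} be made to work in the reduced generic case (each inner $\min$ is in fact $\geq 2$: the ceiling sum is $\geq 1+1$, and $\sdepth S/(Q_i\cap Q_j)\geq\depth S/(Q_i\cap Q_j)=1+\dim S/(Q_i+Q_j)\geq 2$ by Lemma \ref{sc}). The paper's route is different and more delicate: it does not show all three terms dominate $\depth R$; instead it renumbers so that $\dim S/(Q_2+Q_3)$ is maximal, restricts to $\sdepth R<\dim S/Q_i$ (otherwise one is already done since $\depth R\leq\min_i\dim S/Q_i$), combines Proposition \ref{s31} with Theorem \ref{Low} to force out the term $\lceil\frac{\dim S/Q_3+\dim S/(Q_2+Q_3)}{2}\rceil$, and then distinguishes whether $Q_1\subset\sqrt{Q_j}$. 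You should also note that the paper does not begin by assuming the radicals are pairwise incomparable; it funnels those situations into the cases $\dim S/Q_i\leq 1$ or $\dim S/(Q_i+Q_j)=0$ where $\depth R\leq 1$ and \cite{C1} already suffices. So beyond the arithmetic slip, work out the degenerate cases with the exact sequences rather than by a separate preliminary reduction, and make sure the ``term-by-term'' verification is spelled out rather than deferred --- in the paper it is precisely this step that drives the case analysis.
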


\begin{proof}  Applying \cite[Lemma 3.6]{HVZ}
we may reduce the problem to the case when $\dim S/(Q_1+Q_2+ Q_3)=0.$ If one of the $Q_i$ has dimension $0$ then $\depth R=0$ and there exists nothing to show. Assume that all $Q_i$ have dimension $>0$. If one of the $Q_i$ has dimension $1$ then $\depth R=1$ and by \cite{C1} (or \cite{C3}) we get $\sdepth R\geq 1=\depth R$. From now on
we assume that all $Q_i$ have dimension $>1$.

If $Q_1+Q_2$  has dimension $0$ then from the exact sequence
$$0\to R\to S/Q_1\oplus S/Q_2\cap Q_3\to S/(Q_1+Q_2)\cap (Q_1+Q_3)\to 0,$$
we get $\depth R=1$ by Depth Lemma  and we may apply \cite{C1} (or \cite{C3}) to get as above $\sdepth R\geq 1=\depth R$.
Thus we may suppose that $Q_1+Q_2$, $Q_2+Q_3$, $Q_1+Q_3$ have dimension $\geq 1$. Then from the exact sequence
$$0\to S/(Q_1+Q_2)\cap (Q_1+Q_3)\to S/(Q_1+Q_2)\oplus S/(Q_1+Q_3)\to S/(Q_1+Q_2+Q_3)\to 0$$
we get by Depth Lemma $\depth S/(Q_1+Q_2)\cap (Q_1+Q_3)=1.$
Renumbering $Q_i$ we may suppose that $\dim (Q_2+Q_3)\geq\max\{\dim (Q_1+Q_3),\dim (Q_2+Q_1)\}.$
Using Proposition \ref{s31} we have
$$\sdepth R\geq \min\{\sdepth S/Q_2\cap Q_3, \lceil \frac{\dim(S/(Q_1+Q_2))}{2}\rceil + \lceil \frac{\dim(S/(Q_1+Q_3))}{2}\rceil \}
 .$$
We may suppose that $\sdepth R<\dim S/Q_i$ because otherwise  $\sdepth R\geq\dim S/Q_i$ $\geq \depth R$. Thus using Theorem \ref{Low} we get
$$\sdepth R\geq \min\{\lceil \frac{\dim S/Q_3+\dim S/(Q_2+ Q_3)}{2}\rceil,$$
$$\lceil \frac{\dim(S/(Q_1+Q_2))}{2}\rceil +
\lceil \frac{\dim(S/(Q_1+Q_3))}{2}\rceil \}
 .$$
If $Q_1\not \subset \sqrt{Q_3}$ then $\dim S/Q_3>\dim S/(Q_1+Q_3)$ and we get $$\dim S/Q_3+\dim S/(Q_2+ Q_3)>\dim(S/(Q_1+Q_2))+\dim(S/(Q_1+Q_3))$$
because $\dim S/(Q_2+ Q_3)$ is maxim by our choice. It follows that
$$\sdepth R\geq  \lceil \frac{\dim(S/(Q_1+Q_2))}{2}\rceil +
\lceil \frac{\dim(S/(Q_1+Q_3))}{2}\rceil \geq 2
 .$$
But from the first above exact sequence we get $\depth R=2$ with Depth Lemma, that is $\sdepth R\geq \depth R$.

If $Q_1\not \subset \sqrt{Q_2}$ we note that $\dim S/Q_2+\dim S/(Q_2+ Q_3)>\dim(S/(Q_1+Q_2))+\dim(S/(Q_1+Q_3))$ and we proceed similarly as above with $Q_2$ instead $Q_3$.
Note also that if $Q_1 \subset \sqrt{Q_2}$ and $Q_1 \subset \sqrt{Q_3}$ we get $\dim S/(Q_2+Q_3)\geq  \dim S/(Q_2+Q_1)=\dim S/Q_2$, respectively
 $\dim S/(Q_2+Q_3)\geq  \dim S/(Q_3+Q_1)=\dim S/Q_3$. Thus $Q_1\subset \sqrt{Q_3}=\sqrt{Q_2}$ and it follows $\sdepth R\geq \dim S/Q_2$, which is a contradiction.
\end{proof}

\end{document}